\newtheorem{Theorem}{Theorem}[section]
\newtheorem{Definition}{Definition}[section]
\newtheorem{Proposition}[Theorem]{Proposition}
\newtheorem{Lemma}{Lemma}[section]
\newtheorem{Question}{Question}[section]
 \def\@biblabel#1{#1.}
\newcommand{\uu}{{\bf u}}
\newcommand{\x}{{\bf x}}
\newcommand{\y}{{\bf y}}
\newcommand{\z}{{\bf z}}
\newcommand{\q}{{\bf q}}
\newcommand{\0}{{\bf 0}}
\begin{document}
\title{Properties of Some Classes of Structured Tensors\thanks{To appear in: Journal of Optimization: Theory and Applications.}}

\author{Yisheng Song\thanks{ Corresponding author. School of Mathematics and Information Science,
 Henan Normal University, XinXiang HeNan,  P.R. China, 453007.
Email: songyisheng1@gmail.com. This author's work was supported by
the National Natural Science Foundation of P.R. China (Grant No.
11171094, 11271112).  His work was partially done when he was
visiting The Hong Kong Polytechnic University.}, \quad Liqun Qi\thanks{ Department of Applied Mathematics, The Hong Kong Polytechnic University, Hung Hom,
Kowloon, Hong Kong. Email: maqilq@polyu.edu.hk. This author's work was supported by the Hong
Kong Research Grant Council (Grant No. PolyU 502510, 502111, 501212
and 501913).}}

\date{\today}

 \maketitle

\begin{abstract}
\noindent  
  In this paper, we extend some classes of structured matrices to higher order tensors.  We discuss their relationships with positive semi-definite tensors and some other
  structured tensors.    We show that every principal sub-tensor of such a structured tensor is still a structured tensor in the same class, with a lower dimension.
  The potential links of such structured tensors with optimization, nonlinear equations, nonlinear complementarity
problems, variational inequalities and the nonnegative tensor theory
are also discussed.

\noindent {\bf Key words:}\hspace{2mm} P tensor, P$_0$ tensor, B tensor, B$_0$
tensor,  Principal sub-tensor, Eigenvalues.  \vspace{3mm}

\noindent {\bf AMS subject classifications (2010):}\hspace{2mm}
47H15, 47H12, 34B10, 47A52, 47J10, 47H09, 15A48, 47H07.
  \vspace{3mm}

\end{abstract}


\section{Introduction}
\hspace{4mm}  P and P$_0$ matrices have a long history and wide
applications in mathematical sciences.  Fiedler and Pt\'ak first studied P matrices systematically in
\cite{FiP}.   For the
applications of P and P$_0$ matrices and functions in linear
complementarity problems, variational inequalities and nonlinear
complementarity problems, we refer readers to [2-4]. 
It is well-known that a symmetric matric is a P (P$_0$) matrix if
and only if it is positive (semi-)definite \cite[Pages 147,
153]{CPS}.

On the other hand, motivated by the discussion on positive
definiteness of multivariate homogeneous polynomial forms [5-8], 
in 2005, Qi \cite{Qi} introduced the concept of
positive (semi-)definite symmetric tensors. In the same time, Qi  also
introduced eigenvalues, H-eigenvalues, E-eigenvalues and
Z-eigenvalues for symmetric tensors.    It was shown that an even
order symmetric tensor is positive (semi-)definite if and only if
all of its H-eigenvalues or Z-eigenvalues are positive (nonnegative)
\cite[Theorem 5]{Qi}.  Beside automatical control, positive
semi-definite tensors also found applications in magnetic resonance
imaging [10-13] 
and spectral hypergraph theory [14-16].

The following  questions are natural. Can we extend the concept of P and
P$_0$ matrices to P and P$_0$ tensors?  If this can be done, is it
true a symmetric tensor is a P (P$_0$) tensor if and only if it is
positive (semi-)definite?   Are there any odd order P (P$_0$)
tensors?

In Section 3, we will extend the concept of P and P$_0$
matrices to P and P$_0$ tensors.  We will show that a symmetric
tensor is a P (P$_0$) tensor if and only if it is positive (semi-)
definite. The close relationship between P (P$_0$) tensors and
positive (semi-)definite tensors justifies our research on P and
P$_0$ tensors. We will show that there does not exists an odd order
symmetric P tensor.   If an odd order non-symmetric P tensor exists,
then it has no Z-eigenvalues. An odd order P$_0$ tensor has no
nonzero Z-eigenvalues.

In Section 4, we will further study some properties of P and P$_0$
tensors.  We will show that every principal sub-tensor of a P
(P$_0$) tensor is still a P (P$_0$) tensor, and give some sufficient
and necessary conditions for a tensor to be a P (P$_0$) tensor.

The class of B matrices is a subclass of P matrices \cite{Pe, Pe1}.
We will extend the concept of B matrices to B and B$_0$ tensors in
Section 5. It is easily checkable if a given tensor is a B or B$_0$
tensor or not. We will show that a Z tensor is diagonally dominated if
and only if it is  a B$_0$ tensor.   It was proved in
\cite{ZQZ} that a diagonally dominated Z tensor is an M tensor.
Laplacian tensors of uniform hypergraphs, defined as a natural
extension of Laplacian matrices of graphs, are M tensors [16, 20-23]. 
This justifies our research on B and B$_0$
tensors.

Some final remarks will be given in Section 6.  The potential links
of P, P$_0$, B and B$_0$ tensors with optimization, nonlinear
equations, nonlinear complementarity problems, variational
inequalities and the nonnegative tensor theory are discussed. These
encourage further research on P, P$_0$, B and B$_0$ tensors.

\section{Preliminaries}

In this section, we will define the notations and collect some basic definitions and facts, which will be
used later on.

Denote $I_n := \{ 1,2, \cdots, n \}$ and $\mathbb{R}^n:=\{(x_1, x_2,\cdots, x_n)^T;x_i\in  \mathbb{R},  i\in I_n$, where $\mathbb{R}$ is the set of real numbers.  The definitions of P and P$_0$
matrices are as follows.

\begin{Definition} \label{d1}\em
Let $A = (a_{ij})$ be an $n \times n $ real matrix.   We say that
$A$ is\\
(i) a P$_0$ matrix iff for any nonzero vector $\x$ in $\mathbb{R}^n$, there
exists $i \in I_n$ such that $x_i \not = 0$ and
$$ x_i (Ax)_i \ge 0;$$
(ii) a P matrix iff for any nonzero vector $\x$ in $\mathbb{R}^n$,
$$\max_{i\in I_n}x_i (Ax)_i > 0.$$
\end{Definition}

A real $m$th order $n$-dimensional tensor (hypermatrix) $\mathcal{A} = (a_{i_1\cdots i_m})$ is a multi-array of real entries $a_{i_1\cdots
i_m}$, where $i_j \in I_n$ for $j \in I_m$. Denote the set of all
real $m$th order $n$-dimensional tensors by $T_{m, n}$. Then $T_{m,
n}$ is a linear space of dimension $n^m$. Let $\mathcal{A} = (a_{i_1\cdots
i_m}) \in T_{m, n}$. If the entries $a_{i_1\cdots i_m}$ are
invariant under any permutation of their indices, then $\mathcal{A}$ is
called a {\bf symmetric tensor}.  Denote the set of all real $m$th
order $n$-dimensional tensors by $S_{m, n}$. Then $S_{m, n}$ is a
linear subspace of $T_{m, n}$.    Let $\mathcal{A} =(a_{i_1\cdots i_m}) \in
T_{m, n}$ and $\x \in \mathbb{R}^n$.   Then $\mathcal{A} x^m$ is a homogeneous
polynomial of degree $m$, defined by
$$\mathcal{A} x^m: = \sum_{i_1,\cdots, i_m=1}^n a_{i_1\cdots i_m}x_{i_1}\cdots
x_{i_m}.$$   A tensor $\mathcal{A} \in T_{m, n}$ is called {\bf positive
semi-definite} if for any vector $\x \in \mathbb{R}^n$, $\mathcal{A} \x^m \ge 0$,
and is called {\bf positive definite} if for any nonzero vector $\x
\in \mathbb{R}^n$, $\mathcal{A} \x^m > 0$.   Clearly, if $m$ is odd, there is no
nontrivial positive semi-definite tensors.\\

In the following, we extend the definitions of eigenvalues,
H-eigenvalues, E-eigenvalues and Z-eigenvalues of tensors in $S_{m,
n}$ in \cite{Qi} to tensors in $T_{m, n}$.

Denote ${\mathbb{C}}^n:=\{(x_1,x_2,\cdots,x_n)^T;x_i\in {\mathbb{C}},  i \in I_n \}$, where ${\mathbb{C}}$ is the set of complex numbers.
 For any vector $\x \in
\mathbb{C}^n$, $\x^{[m-1]}$ is a vector in $\mathbb{C}^n$ with
its $i$th component defined as $x_i^{m-1}$ for $i \in I_n$.  Let $\mathcal{A}
\in T_{m, n}$.  If  and only if there is a nonzero vector $\x \in \mathbb{C}^n$ and a number $\lambda \in \mathbb{C}$ such that
\begin{equation} \label{eig}
\mathcal{A} \x^{m-1} = \lambda \x^{[m-1]},
\end{equation}
then $\lambda$ is called an {\bf eigenvalue} of $\mathcal{A}$ and $\x$ is
called an {\bf eigenvector} of $\mathcal{A}$, associated with $\lambda$.   If
the eigenvector $\x$ is real, then the eigenvalue $\lambda$ is also
real.  In this case, $\lambda$ and $\x$ are called an {\bf
H-eigenvalue} and an {\bf H-eigenvector} of $\mathcal{A}$, respectively.
For an even order symmetric tensor, H-eigenvalues always exist. An
even order symmetric tensor is positive (semi-)definite if and only
if all of its H-eigenvalues are positive (nonnegative).  Let $\mathcal{A} \in
T_{m, n}$.  If and only if there is a nonzero vector $\x \in \mathbb{C}^n$
and a number $\lambda \in \mathbb{C}$ such that
\begin{equation} \label{eig1}
\mathcal{A} \x^{m-1} = \lambda \x,  \ \ \x^\top \x = 1,
\end{equation}
then $\lambda$ is called an {\bf E-eigenvalue} of $\mathcal{A}$ and $\x$ is
called an {\bf E-eigenvector} of $\mathcal{A}$, associated with $\lambda$. If
the E-eigenvector $\x$ is real, then the E-eigenvalue $\lambda$ is
also real.  In this case, $\lambda$ and $\x$ are called an {\bf
Z-eigenvalue} and an {\bf Z-eigenvector} of $\mathcal{A}$, respectively. For
a symmetric tensor, H-eigenvalues always exist. An even order
symmetric tensor is positive (semi-)definite if and only if all of
its H-eigenvalues or Z-eigenvalues are positive (nonnegative)
\cite[Theorem 5]{Qi}.\\

Throughout this paper, we assume that $m \ge 2$ and $n \ge 1$.   We
use small letters $x, u, v, \alpha, \cdots$, for scalars, small bold
letters $\x, \y, \uu, \cdots$, for vectors, capital letters $A, B,
\cdots$, for matrices, calligraphic letters $\mathcal{A}, \mathcal{B}, \cdots$, for
tensors. All the tensors discussed in this paper are real.   We
denote the zero tensor in $T_{m, n}$ by $\mathcal{O}$.

\section{P and P$_0$ Tensors}
\hspace{4mm}

Let $\mathcal{A} = (a_{i_1\cdots i_m}) \in T_{m, n}$ and $\x \in\mathbb{R}^n$.   Then $\mathcal{A} \x^{m-1}$ is a vector in $\mathbb{R}^n$ with
its $i$th component as
$$\left(\mathcal{A} \x^{m-1}\right)_i: = \sum_{i_2, \cdots, i_m=1}^n a_{ii_2\cdots
i_m}x_{i_2}\cdots x_{i_m}$$ for $i \in I_n$.   We now give the
definitions of P and P$_0$ tensors.

\begin{Definition} \label{d2}\em
Let $\mathcal{A}  = (a_{i_1\cdots i_m}) \in T_{m, n}$.   We say that $A$ is\\
(i) a P$_0$ tensor iff for any nonzero vector $\x$ in $\mathbb{R}^n$, there
exists $i \in I_n$ such that $x_i \not = 0$ and
$$ x_i \left(\mathcal{A} \x^{m-1}\right)_i \ge 0;$$
(ii) a P tensor iff for any nonzero vector $\x$ in $\mathbb{R}^n$,
$$\max_{i\in I_n}x_i \left(\mathcal{A} \x^{m-1}\right)_i > 0.$$
\end{Definition}

Clearly, this definition is a natural extension of Definition \ref{d1}.

We first prove a proposition.

\begin{Proposition} \label{p1}\em
Let $\mathcal{A} \in S_{m, n}$.   If $\mathcal{A} \x^m = 0$ for all $\x \in \mathbb{R}^n$,
then $\mathcal{A} = \mathcal{O}$.
\end{Proposition}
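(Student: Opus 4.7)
The plan is to prove this by showing that the homogeneous polynomial $\mathcal{A}\x^m$ determines a symmetric tensor uniquely, so if the polynomial vanishes identically then the tensor must be zero. There are two natural routes; I would proceed by differentiation, which is the more direct one for an exposition aimed at readers familiar with polynomial forms.

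First I would observe that $\mathcal{A}\x^m = \sum_{i_1,\ldots,i_m=1}^n a_{i_1\cdots i_m} x_{i_1}\cdots x_{i_m}$ is a $C^\infty$ function of $\x$, so the hypothesis $\mathcal{A}\x^m\equiv 0$ implies that every mixed partial derivative of this function vanishes identically. Next I would compute, for any fixed indices $j_1,\ldots,j_m \in I_n$, the partial derivative
\[
\frac{\partial^m}{\partial x_{j_1}\cdots\partial x_{j_m}}\bigl(\mathcal{A}\x^m\bigr).
\]
By linearity this reduces to differentiating each monomial $x_{i_1}\cdots x_{i_m}$, and only those terms whose index multisets $\{i_1,\ldots,i_m\}$ match $\{j_1,\ldots,j_m\}$ survive. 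Using the symmetry of $\mathcal{A}$, all $m!$ surviving terms contribute the same entry $a_{j_1\cdots j_m}$, producing $m!\, a_{j_1\cdots j_m}$. Since the left side is identically zero by hypothesis, we conclude $a_{j_1\cdots j_m}=0$ for every choice of indices, hence $\mathcal{A}=\mathcal{O}$.

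The role of symmetry is the only subtle point: without it, the partial derivative collects the symmetrization of the entries rather than any single entry, and the conclusion can fail (a nonzero antisymmetric part of $\mathcal{A}$ would also give $\mathcal{A}\x^m\equiv 0$). So I would state clearly in the proof where $\mathcal{A}\in S_{m,n}$ is used. I do not anticipate any real obstacle; the argument is just a careful multi-index bookkeeping of the multinomial coefficients that arise when differentiating $\mathcal{A}\x^m$ at order $m$. An alternative is to invoke the polarization identity to recover the full symmetric multilinear form from the diagonal polynomial and then evaluate on standard basis vectors, but the direct differentiation argument is shorter and self-contained.
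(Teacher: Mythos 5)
Your differentiation argument is exactly the approach in the paper, which notes that since the entries of a symmetric $\mathcal{A}$ are (up to the constant $m!$) the $m$-th order partial derivatives of $f(\x)=\mathcal{A}\x^m$, the vanishing of $f$ forces all entries to vanish. You spell out the multinomial bookkeeping more explicitly and correctly flag where symmetry is used, but the underlying idea is identical.
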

\begin{proof}   Denote $f(\x) = \mathcal{A} \x^m$.   Then $f(\x) = 0$ for all $\x \in \mathbb{R}^n$.  This implies
all the partial derivatives of $f$ are zero.   Since the entries of
$\mathcal{A}$ are just some higher order partial derivatives of $f$, we see
that $\mathcal{A} = \mathcal{O}$.
\end{proof}

We now have the following theorem.

\begin{Theorem}\label{t1}\em
Let $\mathcal{A} \in T_{m, n}$ be a P (P$_0$) tensor. Then when $m$ is even,
all of its H-eigenvalues and Z-eigenvalues of $\mathcal{A}$ are positive
(nonnegative). A symmetric tensor is a P (P$_0$) tensor if and only
if it is positive (semi-)definite.  There does not exist an odd
order symmetric P tensor. If an odd order nonsymmetric P tensor
exists, then it has no Z-eigenvalues. An odd order P$_0$ tensor has
no nonzero Z-eigenvalues.
 \end{Theorem}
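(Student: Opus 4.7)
The theorem flows from two elementary identities. Multiplying the $i$th entry of the H-eigenvalue equation $\mathcal{A}\x^{m-1}=\lambda\x^{[m-1]}$ by $x_i$ gives $x_i(\mathcal{A}\x^{m-1})_i=\lambda x_i^m$, and the analogous step applied to the Z-eigenvalue equation $\mathcal{A}\x^{m-1}=\lambda\x$ gives $x_i(\mathcal{A}\x^{m-1})_i=\lambda x_i^2$. I will also use the identity $\mathcal{A}\x^m=\sum_{i=1}^n x_i(\mathcal{A}\x^{m-1})_i$, which holds for any (not necessarily symmetric) tensor.

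For the even order assertion I substitute these identities into Definition~\ref{d2}. Since $\x\ne\0$, both $x_i^m$ (for even $m$) and $x_i^2$ are nonnegative with some strictly positive entry, so the P condition forces $\lambda>0$ and the P$_0$ condition forces $\lambda\ge 0$. For the symmetric equivalence, the reverse direction uses the sum identity: positive (semi-)definiteness makes $\sum_i x_i(\mathcal{A}\x^{m-1})_i$ positive (nonnegative) on every nonzero $\x$, hence some term with $x_i\ne 0$ is positive (nonnegative), which is exactly the P (P$_0$) property. The forward direction combines the H-eigenvalue bound just proved with \cite[Theorem 5]{Qi}, which characterizes positive (semi-)definiteness of an even order symmetric tensor through the signs of its H-eigenvalues.

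All three odd order assertions share a single mechanism. For odd $m$, $m-1$ is even, so componentwise $\mathcal{A}(-\x)^{m-1}=\mathcal{A}\x^{m-1}$ for any tensor; hence if $(\lambda,\x)$ is a Z-eigenpair then $(-\lambda,-\x)$ is one as well. The Z-version of the sign bound needs only $x_i^2\ge 0$ and so holds regardless of parity of $m$: a P tensor would have $\lambda>0$ and $-\lambda>0$, excluding any Z-eigenvalue, while a P$_0$ tensor would have $\lambda\ge 0$ and $-\lambda\ge 0$, excluding any nonzero one. This proves parts 4 and 5. Part 3 follows by combining this non-existence with the standard fact that a symmetric tensor always admits a Z-eigenvalue, obtained by maximizing the continuous function $\mathcal{A}\x^m$ over the compact unit sphere and applying Lagrange multipliers. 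The main obstacle is essentially one of bookkeeping: I must check that the ``$-\x$'' trick goes through without a symmetry assumption, and that the Z-eigenvalue sign bound extends verbatim to odd $m$; both points are immediate once the two identities above are in hand.
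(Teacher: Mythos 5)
Your mechanism is the same as the paper's throughout: the two pointwise identities $x_i(\mathcal{A}\x^{m-1})_i=\lambda x_i^m$ and $x_i(\mathcal{A}\x^{m-1})_i=\lambda x_i^2$ for the sign bounds, the sum identity $\mathcal{A}\x^m=\sum_i x_i(\mathcal{A}\x^{m-1})_i$ for the reverse direction of the equivalence, the $(-\lambda,-\x)$ trick for odd $m$, and Theorem~5 of \cite{Qi} for the H-eigenvalue characterization and for the existence of Z-eigenvalues of a symmetric tensor. All of that is correct, and your spelled-out reverse direction is a reasonable way to fill in what the paper compresses to ``it is easy to see.''

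There is, however, a real gap in the second assertion, the equivalence ``a symmetric tensor is a P (P$_0$) tensor iff it is positive (semi-)definite.'' You explicitly scope your forward implication to \emph{even} order, invoking \cite[Theorem 5]{Qi} to go from positive H-eigenvalues to definiteness. For the P case the odd-order leg is covered by your part~3: neither an odd order symmetric P tensor nor an odd order symmetric positive definite tensor exists, so the biconditional holds vacuously. But for the P$_0$ case you only establish part~5, that an odd order P$_0$ tensor has no nonzero Z-eigenvalues; you never show that an odd order \emph{symmetric} P$_0$ tensor is positive semi-definite. That step is not vacuous and does not follow from the P$_0$ definition alone. The paper closes it by observing that the largest and smallest Z-eigenvalues of such a tensor are both zero, then using the variational characterization $\lambda_{\max}=\max\{\mathcal{A}\x^m:\x^\top\x=1\}$ and $\lambda_{\min}=\min\{\mathcal{A}\x^m:\x^\top\x=1\}$ from \cite[Theorem 5]{Qi} to conclude $\mathcal{A}\x^m\equiv 0$, and finally applying Proposition~\ref{p1} to get $\mathcal{A}=\mathcal{O}$, which is trivially positive semi-definite. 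You already use the variational characterization when you justify existence of Z-eigenvalues, so the ingredients are at hand, but the conclusion ``$\lambda_{\max}=\lambda_{\min}=0\Rightarrow\mathcal{A}\x^m\equiv 0\Rightarrow\mathcal{A}=\mathcal{O}$'' must be stated; without it the biconditional in the P$_0$ case is only proved for even $m$.
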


 \begin{proof}
 Let $m$ be even and an H-eigenvalue $\lambda$ of $\mathcal{A}$ be given. If $\mathcal{A}$ is a P tensor, then by the definition of H-eigenvalues, there is a nonzero $\x \in \mathbb{R}^n$ and a number $\lambda \in \Re$ such that (\ref{eig}) holds. Then by the definition of P tensors, there exists $i\in I_n$ such that
 $$0< x_i(\mathcal{A} \x^{m-1})_i=\lambda x_i^{m}.$$ Since $m$ is an even number, we have $\lambda>
 0$.  Similarly, if $\mathcal{A}$ is a P$_0$ tensor, we may prove that
 $\lambda \ge 0$.   By \cite[Theorem 5]{Qi}, if all H-eigenvalues of an even order symmetric tensor are positive (nonnegative), then that tensor is positive
 (semi-)definite.   We see now that an even order symmetric tensor
 is a P (P$_0$) tensor only if it is positive
(semi-)definite.   By the definitions of P (P$_0$) tensors and
positive (semi-)definite tensors, it is easy to see that an even
order symmetric tensor is a P (P$_0$) tensor if it is positive
(semi-)definite.    Thus, an even order symmetric tensor is a P
(P$_0$)tensor if and only if it is positive (semi-)definite.

 Now, let an Z-eigenvalue $\lambda$ of $\mathcal{A}$ be
 given.  If $\mathcal{A}$ is a P tensor, then by the definition of Z-eigenvalues, there is an $\x \in \mathbb{R}^n$ and and a number $\lambda \in \Re$ such that (\ref{eig1}) holds.  Then by the definition of P tensors, there exists an $i\in I_n$ such that
 $$0<x_i(\mathcal{A} \x^{m-1})_i=\lambda x_i^2.$$   Thus
 $\lambda>0$.     Note that for this, we do not assume that $m$ is
 even.   However, when $m$ is odd, if $\lambda$ is a Z-eigenvalue
 of a tensor in $T_{m, n}$ with a Z-eigenvector
 $\x$, by the definition of Z-eigenvalues, $-\lambda$ is also a Z-eigenvalue
 of that tensor with an Z-eigenvector
 $-\x$.   Thus, if an odd order P tensor
exists, then it has no Z-eigenvalues.    However, by \cite[Theorem 5]{Qi}, a
symmetric tensor always has Z-eigenvalues.    Thus, an odd order
symmetric P-tensor does not exist.  Since an odd order symmetric positive definite tensor
also does not exist and an even order symmetric tensor is a P
tensor if and only if it is positive definite, we conclude that a symmetric tensor is a P
tensor if and only if it is positive definite.

Similarly, if $\mathcal{A}$ is a P$_0$
tensor, we may prove that all of its Z-eigenvalues are nonnegative.
When $m$ is odd, this also means that all of its Z-eigenvalues are
non-positive.   Thus, an odd order P$_0$ tensor has no nonzero
Z-eigenvalues.   By \cite[Theorem 5]{Qi}, a
symmetric tensor always has Z-eigenvalues.  Thus, both the largest Z-eigenvalue $\lambda_{\max}$ and the smallest
Z-eigenvalue $\lambda_{\min}$  of an odd order symmetric P$_0$ tensor $\mathcal{A}$ are zero.  By \cite[Theorem 5]{Qi},
we have
$$\lambda_{\max} = \max \{ \mathcal{A} \x^m : \x^\top \x = 1 \}$$
and
$$\lambda_{\min} = \min \{ \mathcal{A} \x^m : \x^\top \x = 1 \}.$$
Thus, if $\mathcal{A}$ is an odd order symmetric P$_0$ tensor, $\mathcal{A} \x^m = 0$
for all $\x \in \mathbb{R}^n$.  By Proposition \ref{p1}, this implies that
$\mathcal{A} = \mathcal{O}$. By the definition of positive semi-definite tensors, if
$\mathcal{A}$ is an odd order symmetric positive semi-definite tensor, then
$\mathcal{A} = \mathcal{O}$. Since an even order symmetric tensor is a P$_0$ tensor
if and only if it is positive semi-definite, we conclude that a
symmetric tensor is a P$_0$ tensor if and only if it is positive
semi-definite.
 The theorem is proved.
 \end{proof}

\begin{Question} \label{Q1}\em Is there an odd order non-symmetric P tensor?  Is there an odd order nonzero nonsymmetric P$_0$ tensor?\end{Question}

\section{Properties of P and P$_0$ Tensors}
\hspace{4mm}   In this section, we will study some properties of P
and P$_0$ tensors. Based on the definition of P matrices, Mathias
and Pang \cite{MP} introduced a fundamental quantity
 $\alpha(A)$ corresponding to a P matrix $A$ by
   \begin{equation}\label{alpha}\alpha(A):=\min_{\|\x \|_\infty=1}\left\{\max_{i\in I_n}x_i(A\x)_i\right\}\end{equation}
and studied its properties and applications. Mathias \cite{Ma}
showed that $\alpha(A)$ has a lower bound that is larger than $0$
whenever $A$ is a P matrix. Xiu and Zhang \cite{XZ} gave some
extensions of such a quantity and obtained global error bounds for
the vertical and horizontal linear complementarity problems. Also see   Garc\'ia-Esnaola, and Pe\~na \cite{GP} for the error bounds for linear complementarity problems of B-matrices.

In the following, we will show that every principal sub-tensor of a
P (P$_0$) tensor is still a P (P$_0$) tensor, and give some
sufficient and necessary conditions for a tensor to be a P tensor.
Let $\mathcal{A} \in T_{m, n}$.  Define an operator $T_\mathcal{A} : \mathbb{R}^n \to \mathbb{R}^n$
by for any $\x \in \mathbb{R}^n$,
\begin{equation} \label{TA}
T_\mathcal{A} (\x) := \begin{cases}\|\x \|_2^{2-m}\mathcal{A} \x^{m-1},\ \x \neq\0\\
\0,\ \ \ \ \ \ \ \ \ \ \  \ \ \ \ \ \ \ \
\x =\0.\end{cases}
\end{equation}
When $m$ is even, define another operator $F_\mathcal{A} : \mathbb{R}^n \to \mathbb{R}^n$ by for any $\x \in \mathbb{R}^n$,
\begin{equation} \label{FA}
F_\mathcal{A} (\x) :=(\mathcal{A}  \x^{m-1})^{\left[\frac1{m-1}\right]}.
\end{equation}
Here, for a vector $\y \in \mathbb{R}^n$, $\y^{\left[\frac1{m-1}\right]}$ is a vector
in $\mathbb{R}^n$, with its $i$th component to be $y_i^{\frac1{m-1}}$. When
$m$ is even, this is well defined. Then we define two quantities
\begin{equation} \label{Talpha}
\alpha(T_\mathcal{A}):=\min_{\|\x \|_\infty=1}\max_{i \in I_n}x_i(T_\mathcal{A}(\x))_i
\end{equation}
for any $m$, and
\begin{equation} \label{Falpha}
\alpha(F_\mathcal{A}):=\min_{\|\x \|_\infty=1}\max_{i \in I_n}x_i(F_\mathcal{A}(\x))_i
\end{equation}
 when $m$ is even.   When $m=2$, $\alpha(T_\mathcal{A})$ and $\alpha(F_\mathcal{A})$ are simply $\alpha(A)$ defined by (\ref{alpha}).
 We will establish monotonicity  and boundedness of such two quantities when $\mathcal{A}$ is a P (P$_0$) tensor.
 Furthermore, we will show that $\mathcal{A}$ is a P (P$_0$) tensor if and only if $\alpha(T_\mathcal{A})$ is positive (nonnegative),  and when $m$ is even, $\mathcal{A}$ is a P tensor (P$_0$) if and only if $\alpha(F_\mathcal{A})$ is positive (nonnegative).

\subsection{Principal Sub-Tensors of P (P$_0$) Tensors}
\hspace{4mm}

  Recall that a tensor $\mathcal{C} \in T_{m, r}$  is called {\bf a principal sub-tensor}  of a tensor $\mathcal{A} = (a_{i_1\cdots i_m}) \in T_{m, n}$ ($1 \le r\leq n$) iff there is a set $J$ that composed of $r$ elements in $I_n$ such that
 $$\mathcal{C} = (a_{i_1\cdots i_m}),\mbox{ for all } i_1, i_2, \cdots, i_m\in J.$$ The concept were first introduced and used in \cite{Qi} for symmetric tensor. We denote by $\mathcal{A}^J_r$ the principal sub-tensor of a tensor $\mathcal{A} \in T_{m, n}$ such that the entries of  $\mathcal{A}^J_r$ are indexed by $J \subset I_n$ with $|J|=r$ ($1 \le r\leq n$), and denote by $\x_J$ the $r$-dimensional sub-vector of a vector $\x \in \mathbb{R}^n$, with the components of $\x_J$ indexed by $J$.   Note that for $r=1$, the principal sub-tensors are just the diagonal entries.

 \begin{Theorem}\label{t2} \em Let $\mathcal{A}$ be a P (P$_0$) tensor.  Then every principal sub-tensor of $\mathcal{A}$ is P (P$_0$) tensor.   In particular, all the diagonal entries of
 a P (P$_0$) tensor are positive (nonnegative).
  \end{Theorem}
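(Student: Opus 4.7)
The plan is to prove both parts by a direct reduction argument: I would take an arbitrary principal sub-tensor and extend test vectors from $\mathbb{R}^r$ into $\mathbb{R}^n$ by padding zeros, then pull the P or P$_0$ property of $\mathcal{A}$ back down to $\mathcal{A}^J_r$. Concretely, fix $J \subset I_n$ with $|J| = r$ and let $\mathcal{A}^J_r \in T_{m,r}$ be the corresponding principal sub-tensor. For any nonzero $\y \in \mathbb{R}^r$, define $\x \in \mathbb{R}^n$ by $x_i = y_i$ for $i \in J$ and $x_i = 0$ for $i \notin J$; then $\x \neq \0$.

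The key computation I would perform is to observe that for every $i \in J$,
\begin{equation*}
(\mathcal{A} \x^{m-1})_i = \sum_{i_2,\dots,i_m=1}^n a_{i i_2 \cdots i_m} x_{i_2} \cdots x_{i_m} = \sum_{i_2,\dots,i_m \in J} a_{i i_2 \cdots i_m} y_{i_2} \cdots y_{i_m} = (\mathcal{A}^J_r \y^{m-1})_i,
\end{equation*}
because any term with some $i_k \notin J$ contains a zero factor $x_{i_k}$. With this identification in hand, the two cases separate cleanly. In the P$_0$ case, apply Definition \ref{d2}(i) to $\x$: there exists $i \in I_n$ with $x_i \neq 0$ and $x_i(\mathcal{A}\x^{m-1})_i \geq 0$; since $x_i \neq 0$ forces $i \in J$, the displayed identity gives $y_i(\mathcal{A}^J_r \y^{m-1})_i \geq 0$, which is exactly the P$_0$ property for $\mathcal{A}^J_r$.

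For the P case, the subtle point (and the only real obstacle) is to rule out that the maximum in Definition \ref{d2}(ii) is attained outside $J$. I would note that for $i \notin J$ we have $x_i = 0$, hence $x_i(\mathcal{A}\x^{m-1})_i = 0$; therefore the strictly positive maximum $\max_{i \in I_n} x_i(\mathcal{A}\x^{m-1})_i > 0$ guaranteed by $\mathcal{A}$ being a P tensor must be attained at some index in $J$. Substituting via the displayed identity yields $\max_{i \in J} y_i (\mathcal{A}^J_r \y^{m-1})_i > 0$, establishing the P property for $\mathcal{A}^J_r$.

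Finally, for the diagonal claim, I would specialize to $r = 1$ and $J = \{i\}$, so that $\mathcal{A}^J_1$ is the single entry $a_{i i \cdots i}$. Applying the just-established P (P$_0$) property to the scalar test vector $y = 1$ forces $a_{i i \cdots i} > 0$ (respectively $a_{i i \cdots i} \geq 0$), which is the asserted positivity (nonnegativity) of the diagonal entries.
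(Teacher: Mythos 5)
Your proposal is correct and follows essentially the same zero-padding reduction as the paper's own proof: extend a nonzero test vector from $\mathbb{R}^r$ to $\mathbb{R}^n$ by inserting zeros outside $J$, apply the P or P$_0$ property of $\mathcal{A}$ to the extended vector, and note that the witnessing index must lie in $J$ because all indices outside $J$ have $x_i = 0$ and hence contribute zero. Your write-up is a bit more explicit than the paper's, spelling out the identity $(\mathcal{A}\x^{m-1})_i = (\mathcal{A}^J_r\y^{m-1})_i$ for $i \in J$ and treating the P and P$_0$ cases separately, but the underlying argument is identical.
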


  \begin{proof}
   Let a principal sub-tensor $\mathcal{A}^J_r$ of $\mathcal{A}$ be given. Then for each nonzero vector $\x=(x_{j_1},\cdots,x_{j_r})^\top \in \Re^r$, we may choose $\x^*=(x^*_1,x^*_2, \cdots, x^*_n)^\top \in \mathbb{R}^n$ with $x^*_i=x_i $ for $ i\in J$ and $x^*_i=0$ for $i\notin J$. Suppose now that $\mathcal{A}$ is a P tensor, then there exists $j\in I_n$ such that
  $$0<x^*_j(\mathcal{A}(\x^*)^{m-1})_j=x_j(\mathcal{A}^J_r\x_J^{m-1})_j.$$ By the definition of $\x^*$, we have $j\in J$, and hence  $\mathcal{A}^J_r$ is a P tensor.  The case for P$_0$ tensors can be proved similarly.
   \end{proof}

\subsection{A Necessary and Sufficient Condition for P Tensors}
\hspace{4mm}

The following is a sufficient and necessary condition for a tensor to be a P tensor.

 \begin{Theorem}\label{t3}\em Let $\mathcal{A} \in T_{m, n}$.   Then $\mathcal{A}$ is a P tensor if and only if for each nonzero $\x\in \mathbb{R}^n$, there exists an $n-$dimensional positive diagonal matrix $D_{\x}$ such that $\x^\top D_{\x}(\mathcal{A} \x^{m-1})$ is positive.
  \end{Theorem}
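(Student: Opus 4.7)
The plan is to prove the two implications of the theorem separately, using only Definition \ref{d2} together with an explicit construction of $D_{\x}$ based on the sign pattern of the scalars $x_i(\mathcal{A}\x^{m-1})_i$ across $i \in I_n$.

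For the easier direction (sufficiency of the diagonal-matrix condition), I would assume that for every nonzero $\x \in \mathbb{R}^n$ there is a positive diagonal matrix $D_{\x}=\mathrm{diag}(d_1,\ldots,d_n)$ with
$$\x^\top D_{\x}(\mathcal{A}\x^{m-1}) = \sum_{i=1}^n d_i\, x_i(\mathcal{A}\x^{m-1})_i > 0.$$
Since each $d_i>0$, at least one term $d_i\, x_i(\mathcal{A}\x^{m-1})_i$ must be strictly positive, so some $i\in I_n$ satisfies $x_i(\mathcal{A}\x^{m-1})_i>0$. Hence $\max_{i\in I_n} x_i(\mathcal{A}\x^{m-1})_i>0$ and $\mathcal{A}$ is a P tensor by Definition \ref{d2}(ii).

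For the necessity direction, fix a nonzero $\x$ and partition $I_n$ into
$$I_+ := \{i : x_i(\mathcal{A}\x^{m-1})_i>0\},\quad I_- := \{i : x_i(\mathcal{A}\x^{m-1})_i<0\},\quad I_0 := \{i : x_i(\mathcal{A}\x^{m-1})_i=0\}.$$
Since $\mathcal{A}$ is a P tensor, $I_+\ne\emptyset$. Introduce
$$s_+ := \sum_{i\in I_+} x_i(\mathcal{A}\x^{m-1})_i > 0, \qquad s_- := \sum_{i\in I_-}\bigl|x_i(\mathcal{A}\x^{m-1})_i\bigr| \ge 0,$$
choose any $\eps$ with $0 < \eps < s_+/(s_- + 1)$, and define $d_i=1$ for $i\in I_+\cup I_0$ and $d_i=\eps$ for $i\in I_-$. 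Then $D_{\x}=\mathrm{diag}(d_1,\ldots,d_n)$ is positive diagonal and
$$\x^\top D_{\x}(\mathcal{A}\x^{m-1}) = s_+ - \eps\, s_- > 0$$
by the choice of $\eps$, which completes this direction.

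No serious obstacle is anticipated: both implications reduce to elementary manipulations of weighted sums of real scalars with prescribed signs. The substance of the result is that the pointwise condition defining a P tensor can be repackaged as the existence of a positive diagonal reweighting making the entire inner product $\x^\top D_{\x}(\mathcal{A}\x^{m-1})$ strictly positive, a form that parallels the classical P matrix characterization and is convenient for use with the quantities $\alpha(T_\mathcal{A})$ and $\alpha(F_\mathcal{A})$ introduced earlier in Section 4.
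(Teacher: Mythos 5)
Your proof is correct and takes essentially the same approach as the paper: sufficiency by observing that a positive weighted sum forces at least one positive term, and necessity by choosing a positive diagonal matrix that puts weight $1$ on the favorable indices and a small weight $\eps$ on the rest. The paper's necessity step singles out one index $k$ with $x_k(\mathcal{A}\x^{m-1})_k>0$ and sets $d_j=\eps$ for all $j\ne k$, while you partition by sign and weight the whole nonnegative part by $1$, giving a slightly more explicit threshold for $\eps$; this is a cosmetic refinement, not a different route.
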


   \begin{proof}
  First, we show the necessity.   Take a nonzero $\x \in \mathbb{R}^n$.  It follows from the definition of  P tensors that there is $k\in I_n$ such that $x_k(\mathcal{A} \x^{m-1})_k>0$. Then for an enough small $\varepsilon>0$, we have $$x_k(\mathcal{A} \x^{m-1})_k+\varepsilon\left(\sum_{j \in I_n, j\ne k}x_j\left(\mathcal{A} \x^{m-1}\right)_j\right)>0.$$
 Take $D_{\x}=\mbox{diag}(d_1,d_2,\cdots,d_n)$ with $d_k=1$ and $d_j=\varepsilon$ for $j\ne k$. Then we have $$\x^\top D_{\x}(\mathcal{A} \x^{m-1})>0.$$

   Now we show the sufficiency. Assume that for each nonzero $\x \in \mathbb{R}^n$, there exists an $n-$dimensional  diagonal matrix $D_{\x}=\mbox{diag}(d_1,d_2,\cdots,d_n)$ with $d_i>0$ for all $i \in I_n$ such that $$0<\x^\top D_{\x}(\mathcal{A} \x^{m-1})=\sum_{i=1}^{n}d_i(x_i(\mathcal{A} \x^{m-1})_i).$$
  Since $d_i>0$ for all $i\in I_n$, there is an $i\in I_n$ such that $x_i(\mathcal{A} \x^{m-1})_i>0$. Otherwise, $x_i(\mathcal{A} \x^{m-1})_i\leq0$ for all $i$.  Then $\sum_{i=1}^{n}d_i(x_i(\mathcal{A} \x^{m-1})_i)\leq 0$, a contradiction.  Hence $\mathcal{A}$ is a P tensor.

 The desired conclusion follows.  \end{proof}

\subsection{Monotonicity and Boundedness of $\alpha(F_\mathcal{A})$ and $\alpha(T_\mathcal{A})$}
\hspace{4mm}

Recall that an operator $T : \mathbb{R}^n \to \mathbb{R}^n$ is called {\bf
positively homogeneous} iff $T(t\x)=tT(\x)$ for each $t>0$ and all
$\x\in \mathbb{R}^n$.  For $\x \in \mathbb{R}^n$, it is known well that
$$\|\x\|_\infty:=\max\{|x_i|;i \in I_n \}\mbox{ and } \|\x
\|_2:=\left(\sum_{i=1}^n|x_i|^2\right)^{\frac12}$$ are two main norms
defined on $\mathbb{R}^n$.  Then for a continuous, positively homogeneous
operator $T : \mathbb{R}^n\to \mathbb{R}^n$,  it is obvious that
$$\|T\|_\infty:=\max_{\|\x\|_\infty=1}\|T(\x)\|_\infty$$ is an
operator norm of $T$ and
$\|T(\x)\|_\infty\leq\|T\|_\infty\|\x\|_\infty$ for any $\x \in
\mathbb{R}^n$.  For $\mathcal{A} \in T_{m, n}$, let $T_\mathcal{A}$ be defined by (\ref{TA}).
When $m$ is even, let $F_\mathcal{A}$ be defined by (\ref{FA}). Clearly, both
$F_\mathcal{A}$ and $T_\mathcal{A}$ are continuous and positively homogeneous. The
following upper bounds of the operator norm were established  by
Song and Qi \cite{SQ}.
\begin{Lemma} \label{l1} \em (Song and Qi \cite[Theorem 4.3]{SQ})  Let $\mathcal{A} = (a_{i_1\cdots i_m}) \in T_{m, n}$. Then
\begin{itemize}
\item[(i)] $\|T_\mathcal{A}\|_{\infty}\leq \max\limits_{i\in I_n}\left(\sum\limits_{i_2,\cdots,i_m=1}^{n}|a_{ii_2\cdots i_m}|\right)$;
\item[(ii)] $\|F_\mathcal{A}\|_{\infty}\leq\max\limits_{i\in I_n}\left(\sum\limits_{i_2,\cdots,i_m=1}^{n}|a_{ii_2\cdots i_m}|\right)^{\frac{1}{m-1}}$, when $m$ is even.
\end{itemize} \end{Lemma}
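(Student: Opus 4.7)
The plan is to bound $\|T_\mathcal{A}(\x)\|_\infty$ and $\|F_\mathcal{A}(\x)\|_\infty$ componentwise on the $\ell_\infty$ unit sphere $\{\x \in \mathbb{R}^n : \|\x\|_\infty = 1\}$, since the operator norm is the supremum of these quantities over that sphere. The workhorse estimate, used in both parts, is the elementary bound
$$|(\mathcal{A} \x^{m-1})_i| = \left|\sum_{i_2,\dots,i_m=1}^n a_{ii_2\cdots i_m}\, x_{i_2}\cdots x_{i_m}\right| \le \sum_{i_2,\dots,i_m=1}^n |a_{ii_2\cdots i_m}|\,|x_{i_2}|\cdots|x_{i_m}|,$$
which together with $|x_j| \le \|\x\|_\infty = 1$ gives $|(\mathcal{A} \x^{m-1})_i| \le \sum_{i_2,\dots,i_m=1}^n |a_{ii_2\cdots i_m}|$ for every $i \in I_n$.

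For part (i), I would combine this componentwise bound with the scaling factor $\|\x\|_2^{2-m}$ appearing in the definition of $T_\mathcal{A}$. The only subtlety is the sign of the exponent: since $m \ge 2$, we have $2-m \le 0$, and on the $\ell_\infty$ unit sphere the well-known norm comparison $\|\x\|_2 \ge \|\x\|_\infty = 1$ yields $\|\x\|_2^{2-m} \le 1$. Therefore $|(T_\mathcal{A}(\x))_i| \le \sum_{i_2,\dots,i_m=1}^n |a_{ii_2\cdots i_m}|$, and taking the maximum over $i$ followed by the supremum over $\|\x\|_\infty = 1$ produces the claimed bound on $\|T_\mathcal{A}\|_\infty$.

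For part (ii), with $m$ even the exponent $1/(m-1)$ corresponds to an odd root, so the map $t \mapsto t^{1/(m-1)}$ is a well-defined monotone odd function on $\mathbb{R}$. Applying it to the bound on $|(\mathcal{A} \x^{m-1})_i|$ gives
$$|(F_\mathcal{A}(\x))_i| = \left|(\mathcal{A} \x^{m-1})_i\right|^{1/(m-1)} \le \Bigl(\sum_{i_2,\dots,i_m=1}^n |a_{ii_2\cdots i_m}|\Bigr)^{1/(m-1)},$$
and again taking the maximum over $i$ and the supremum over $\|\x\|_\infty = 1$ delivers the bound on $\|F_\mathcal{A}\|_\infty$. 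The main (and only) obstacle worth pausing on is the $\|\x\|_2^{2-m}$ factor in (i): one must notice that the direction of the $\ell_2$ vs.\ $\ell_\infty$ comparison lines up with the sign of $2-m$ so that this factor does not spoil the bound. Everything else is a direct componentwise estimate using the triangle inequality.
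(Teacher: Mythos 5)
The paper does not actually prove this lemma: it is imported verbatim from Song and Qi \cite[Theorem 4.3]{SQ}, so there is no in-text proof to compare against. Judged on its own, your argument is correct and is the natural one. You correctly use the paper's definition $\|T\|_\infty=\max_{\|\x\|_\infty=1}\|T(\x)\|_\infty$, apply the triangle inequality with $|x_j|\le\|\x\|_\infty=1$ to get $|(\mathcal{A}\x^{m-1})_i|\le\sum_{i_2,\dots,i_m}|a_{ii_2\cdots i_m}|$, and you correctly handle the one delicate point in (i): on the $\ell_\infty$ unit sphere $\|\x\|_2\ge\|\x\|_\infty=1$ and $2-m\le 0$, so $\|\x\|_2^{2-m}\le 1$ and the prefactor only helps. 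In (ii) you correctly note that for $m$ even the exponent $1/(m-1)$ is an odd root, so $|(F_\mathcal{A}(\x))_i|=|(\mathcal{A}\x^{m-1})_i|^{1/(m-1)}$ and the monotonicity of $t\mapsto t^{1/(m-1)}$ on $[0,\infty)$ transfers the bound. Taking $\max_i$ and then the supremum over the sphere completes both parts. This is the standard elementary proof and almost certainly the same route used in the cited reference; there is no gap.
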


Let $\alpha(F_\mathcal{A})$ and $\alpha(T_\mathcal{A})$ be defined by (\ref{Falpha}) and (\ref{Talpha}).     We now establish their monotonicity and boundedness.   The proof technique is similar to the proof technique of \cite[Theorem 1.2]{XZ}.   For completeness, we give the proof here.

\begin{Theorem}\label{p2}\em Let  $\mathcal{D} =\mbox{diag}(d_1, d_2,\cdots, d_n)$ be a nonnegative diagonal tensor in $T_{m, n}$ and $\mathcal{A} = (a_{i_1\cdots i_m})$ be a P$_0$ tensor in $T_{m, n}$. Then
 \begin{itemize}
\item[(i)] $\alpha(T_\mathcal{A})\leq\alpha(T_{\mathcal{A}+\mathcal{D}})$  whenever  $m$ is even;
\item[(ii)]  $\alpha(T_\mathcal{A})\leq\alpha(T_{\mathcal{A}^J_r})$ for all principal sub-tensors $\mathcal{A}^J_r$;
\item[(iii)]  $\alpha(F_\mathcal{A})\leq\alpha(F_{\mathcal{A}^J_r})$ for all principal sub-tensors $\mathcal{A}^J_r$, when $m$ is even;
\item[(iv)]  $\alpha(T_\mathcal{A})\leq \max\limits_{i\in I_n}\left(\sum\limits_{i_2,\cdots,i_m=1}^{n}|a_{ii_2\cdots i_m}|\right)$;
\item[(v)]  $\alpha(F_\mathcal{A})\leq \max\limits_{i\in I_n}\left(\sum\limits_{i_2,\cdots,i_m=1}^{n}|a_{ii_2\cdots i_m}|\right)^{\frac1{m-1}}$, when $m$ is even.
\end{itemize}
  \end{Theorem}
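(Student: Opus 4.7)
The plan is to separate the five parts into two groups. Parts (i)--(iii) are monotonicity statements and rest on a pointwise comparison of $x_i(\mathcal{A}\x^{m-1})_i$ under enlargement of the tensor; parts (iv)--(v) are upper bounds that follow directly from the operator norm estimates in Lemma \ref{l1}.

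For (i), I would expand $(\mathcal{A}+\mathcal{D})\x^{m-1}$ componentwise and observe that
\[
x_i\bigl((\mathcal{A}+\mathcal{D})\x^{m-1}\bigr)_i \;=\; x_i(\mathcal{A}\x^{m-1})_i + d_i x_i^m \;\ge\; x_i(\mathcal{A}\x^{m-1})_i,
\]
where $d_i x_i^m \ge 0$ because $m$ is even and $d_i \ge 0$. The positive scalar $\|\x\|_2^{2-m}$ preserves this pointwise inequality, so $\max_i x_i(T_{\mathcal{A}+\mathcal{D}}(\x))_i \ge \max_i x_i(T_\mathcal{A}(\x))_i$; minimizing over $\|\x\|_\infty = 1$ delivers (i).

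For (ii) and (iii), the standard device is to take $\x \in \mathbb{R}^r$ with $\|\x\|_\infty = 1$ and extend it to $\x^* \in \mathbb{R}^n$ by placing zeros outside $J$, exactly as in the proof of Theorem \ref{t2}. Then $\|\x^*\|_\infty = 1$, $\|\x^*\|_2 = \|\x\|_2$, and for $i \in J$,
\[
(\mathcal{A}(\x^*)^{m-1})_i \;=\; (\mathcal{A}^J_r \x^{m-1})_i,
\]
while $x^*_i = 0$ for $i \notin J$ forces those terms to vanish inside $\max_{i \in I_n} x^*_i(T_\mathcal{A}(\x^*))_i$. The main subtlety here --- and I expect this to be the principal obstacle --- is that the zero contributions from indices outside $J$ leave the overall maximum equal to the maximum over $J$ only when the latter is nonnegative. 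This is exactly where I would invoke Theorem \ref{t2}: since $\mathcal{A}$ is P$_0$, so is $\mathcal{A}^J_r$, and then by the definition of a P$_0$ tensor applied to the nonzero vector $\x$, there is some $i \in J$ with $x_i \ne 0$ and $x_i(\mathcal{A}^J_r \x^{m-1})_i \ge 0$. Hence the two maxima coincide, and minimizing over the padded vectors $\x^*$ --- a restricted subset of those used to define $\alpha(T_\mathcal{A})$ --- yields $\alpha(T_\mathcal{A}) \le \alpha(T_{\mathcal{A}^J_r})$. The proof of (iii) is identical, since the componentwise $[1/(m-1)]$ power, well-defined for even $m$, preserves both the zero padding and the sign structure.

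For (iv) and (v), the observation is that whenever $\|\x\|_\infty = 1$ and $\x\ne \0$,
\[
x_i (T_\mathcal{A}(\x))_i \;\le\; |x_i|\,|(T_\mathcal{A}(\x))_i| \;\le\; \|\x\|_\infty \|T_\mathcal{A}(\x)\|_\infty \;\le\; \|T_\mathcal{A}\|_\infty,
\]
so $\max_i x_i(T_\mathcal{A}(\x))_i \le \|T_\mathcal{A}\|_\infty$ uniformly in $\x$. Taking the infimum over $\x$ and invoking Lemma \ref{l1}(i) gives (iv); the same chain with $F_\mathcal{A}$ in place of $T_\mathcal{A}$ and Lemma \ref{l1}(ii) gives (v). Only the zero-padding argument combined with the P$_0$ property of the sub-tensor requires any real thought; the remaining pieces are routine pointwise comparisons.
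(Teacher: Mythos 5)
Your proof is correct and follows essentially the same route as the paper: the pointwise comparison with the nonnegative diagonal perturbation for (i), the zero-padding device for (ii) and (iii), and the operator-norm chain combined with Lemma \ref{l1} for (iv) and (v). You also make explicit one step the paper's displayed computation passes over silently: the padded maximum over $I_n$ picks up zero terms from the indices outside $J$, so it coincides with the maximum over $J$ only because $\mathcal{A}^J_r$ is again P$_0$ by Theorem \ref{t2}, which forces that inner maximum to be nonnegative; this is a genuine refinement of the paper's argument rather than a different approach.
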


  \begin{proof}
  (i) By the definition of P$_0$ tensors, clearly $\mathcal{A}+\mathcal{D}$ is a P$_0$ tensor.  Then $\alpha(T_{\mathcal{A}+\mathcal{D}})$ is well-defined. Since $m$ is even, then $x_i^m>0$ for $x_i\ne 0$, and so
  $$\begin{aligned}
  \alpha(T_\mathcal{A})=&\min_{\|\x\|_\infty=1}\left\{\max_{i\in I_n}x_i(T_\mathcal{A}(\x))_i\right\}\\
  =& \min_{\|\x\|_\infty=1}\left\{\|\x\|_2^{2-m}\max_{i\in I_n}x_i(\mathcal{A} \x^{m-1})_i\right\}\\
  \leq &\min_{\|\x\|_\infty=1}\left\{\|\x\|_2^{2-m}\max_{i\in I_n}\left\{x_i(\mathcal{A} \x^{m-1})_i+d_ix_i^m\right\}\right\}\\
  =& \min_{\|\x\|_\infty=1}\left\{\max_{i\in I_n}x_i\left(\|\x\|_2^{2-m}(\mathcal{A}+\mathcal{D})\x^{m-1}\right)_i\right\}\\
  =&\min_{\|\x\|_\infty=1}\left\{\max_{i\in I_n}x_i\left(T_{\mathcal{A}+\mathcal{D}}(\x)\right)_i\right\}\\
  =&\alpha(T_{\mathcal{A}+\mathcal{D}}).
  \end{aligned}$$

  (ii) Let a principal sub-tensor $\mathcal{A}^J_r$ of $\mathcal{A}$ be given. Then for each nonzero vector $\z=(z_1,\cdots,z_r)^\top \in \Re^r$, we may define $\y(\z)=(y_1(\z),y_2(\z), \cdots, y_n(\z))^\top \in \mathbb{R}^n$ with $y_i(\z)=z_i $ for $ i\in J$ and $y_i(\z)=0$ for $i\notin J$.  Thus $\|\z\|_\infty=\|\y(\z)\|_\infty$ and $\|\z\|_2=\|\y(\z)\|_2$. Hence,
  $$\begin{aligned}
  \alpha(T_\mathcal{A})=&\min_{\|\x\|_\infty=1}\left\{\max_{i\in I_n}x_i(T_\mathcal{A}(\x))_i\right\}\\
  =&\min_{\|\x\|_\infty=1}\left\{ \|\x\|_2^{2-m}\max_{i\in I_n}x_i(\mathcal{A}\x^{m-1})_i\right\}\\
  \leq & \min_{\|\y(\z)\|_\infty=1}\left\{\|\y(\z)\|_2^{2-m}\max_{i\in I_n}\{\y(\z)_i(\mathcal{A}\y(\z)^{m-1})_i\right\}\}\\
  =& \min_{\|\z\|_\infty=1}\left\{\max_{i\in I_n}z_i(\|\z\|_2^{2-m}\mathcal{A}^J_r\z^{m-1})_i\right\}\\
  =&\min_{\|\z\|_\infty=1}\left\{\max_{i\in I_n}z_i(T_{\mathcal{A}^J_r}(\z))_i\right\}\\
  =&\alpha(T_{\mathcal{A}^J_r}).
  \end{aligned}$$

 Similarly, we may show (iii).

  (iv) Since for each nonzero vector $\x=(x_1,\cdots,x_n)^\top \in \mathbb{R}^n$ and each $i\in I_n$,
  $$x_i(T_\mathcal{A}(\x))_i \leq \|\x\|_\infty \|T_\mathcal{A}(\x)\|_\infty\leq\|T_\mathcal{A}\|_\infty\|\x\|_\infty^2,$$
  Then $$\max_{i\in I_n}x_i(T_\mathcal{A}(\x))_i\leq\|T_\mathcal{A}\|_\infty\|\x\|^2_\infty.$$
  Therefore, we have $$\alpha(T_\mathcal{A})=\min_{\|\x\|_\infty=1}\{\max_{i\in I_n}x_i(T_\mathcal{A}(\x))_i\}\leq \|T_\mathcal{A}\|_\infty,$$
  and hence, by Lemma \ref{l1}, the desired conclusion follows.

  Similarly, we may show (v).
  \end{proof}

\subsection{Necessary and Sufficient  Conditions Based Upon $\alpha(F_\mathcal{A})$ and $\alpha(T_\mathcal{A})$}
\hspace{4mm}

We now give necessary  and sufficient conditions for a tensor $A \in T_{m, n}$ to be a P (P$_0$) tensor, based upon $\alpha(F_\mathcal{A})$ and $\alpha(T_\mathcal{A})$.

 \begin{Theorem}\label{t4}\em Let $\mathcal{A} \in T_{m, n}$.  Then
 \begin{itemize}
\item[(i)] $\mathcal{A}$ is a P (P$_0$) tensor if and only if $\alpha(T_\mathcal{A})$ is positive (nonnegative);
\item[(ii)] when $m$ is even, $\mathcal{A}$ is a P (P$_0$) tensor if and only if $\alpha(F_\mathcal{A})$ is positive (nonnegative).
\end{itemize}
  \end{Theorem}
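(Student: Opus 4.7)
The strategy is to combine three routine observations — sign-preservation, positive homogeneity, and compactness of the $\ell_\infty$ unit sphere $S:=\{\x:\|\x\|_\infty=1\}$. First I would record the sign identity. For every $\x\ne\0$ and $i\in I_n$,
$$x_i\,(T_\mathcal{A}(\x))_i \;=\; \|\x\|_2^{2-m}\,x_i\,(\mathcal{A}\x^{m-1})_i,$$
and when $m$ is even, writing the componentwise $(m-1)$-st real root (well-defined because $m-1$ is odd),
$$x_i\,(F_\mathcal{A}(\x))_i \;=\; x_i\,\bigl((\mathcal{A}\x^{m-1})_i\bigr)^{1/(m-1)}.$$
In both expressions the scalar factor is strictly positive, so $x_i(T_\mathcal{A}(\x))_i$ and $x_i(F_\mathcal{A}(\x))_i$ share the sign of $x_i(\mathcal{A}\x^{m-1})_i$ and vanish together.

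Next I would exploit positive homogeneity. A short computation gives $T_\mathcal{A}(t\x)=tT_\mathcal{A}(\x)$ and, when $m$ is even, $F_\mathcal{A}(t\x)=tF_\mathcal{A}(\x)$ for every $t>0$, so both $x_i(T_\mathcal{A}(\x))_i$ and $x_i(F_\mathcal{A}(\x))_i$ are positively homogeneous of degree $2$ in $\x$. Hence the P (resp.\ P$_0$) condition for $\mathcal{A}$ need only be verified on $S$, and by the sign identity it becomes the requirement that $\max_{i}x_i(T_\mathcal{A}(\x))_i>0$ (resp.\ $\ge 0$) on $S$; analogously for $F_\mathcal{A}$.

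Finally I would apply compactness to conclude. The map $\x\mapsto \max_i x_i(T_\mathcal{A}(\x))_i$ is continuous on $S$ (continuity of $T_\mathcal{A}$ on $S$ is immediate since $\|\x\|_2>0$ there; for $F_\mathcal{A}$ one uses continuity of $y\mapsto y^{1/(m-1)}$ on $\mathbb{R}$, which is valid because $m-1$ is odd). The infimum defining $\alpha(T_\mathcal{A})$ is therefore attained at some $\x^\ast\in S$: if $\mathcal{A}$ is P (resp.\ P$_0$), the value at $\x^\ast$ is positive (resp.\ nonnegative), giving $\alpha(T_\mathcal{A})>0$ (resp.\ $\ge 0$); conversely, if $\alpha(T_\mathcal{A})>0$ (resp.\ $\ge 0$), every $\x\in S$ satisfies $\max_i x_i(T_\mathcal{A}(\x))_i>0$ (resp.\ $\ge 0$), hence the P (P$_0$) condition on $S$, and by homogeneity on all of $\mathbb{R}^n\setminus\{\0\}$. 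Part (ii) is the same argument with $F_\mathcal{A}$ in place of $T_\mathcal{A}$. The only step that requires genuine care is verifying the homogeneity accounting for $T_\mathcal{A}$ — the weight $\|\x\|_2^{2-m}$ and the $(m-1)$-homogeneous term $\mathcal{A}\x^{m-1}$ must combine to produce exactly degree $1$, which legitimises restriction to $S$; beyond that the proof reduces to the standard "continuous function on a compact set attains its extrema" principle.
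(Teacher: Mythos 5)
Your argument for the P case matches the paper's: sign-preservation of $T_\mathcal{A}$ and $F_\mathcal{A}$, reduction to the $\ell_\infty$ unit sphere by positive homogeneity, and attainment of the minimum there; you make the compactness/continuity step explicit where the paper takes it for granted, but the route is the same.

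There is, however, a gap in the P$_0$ half of the converse direction, which the paper also leaves unaddressed (it dismisses the P$_0$ case with ``the proof \dots\ is similar''). You assert that the P$_0$ condition on $S$ ``becomes the requirement that $\max_i x_i(T_\mathcal{A}(\x))_i \ge 0$,'' but this is only a necessary consequence of the P$_0$ definition, not an equivalent reformulation. Being P$_0$ asks for an index $i$ with $x_i \ne 0$ \emph{and} $x_i(\mathcal{A}\x^{m-1})_i \ge 0$; by contrast, $\max_i x_i(\mathcal{A}\x^{m-1})_i \ge 0$ can hold trivially because any $j$ with $x_j = 0$ contributes $x_j(\mathcal{A}\x^{m-1})_j = 0$ to the maximum, while every index with $x_i \ne 0$ could give a strictly negative product. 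Concretely, for $m=2$ take $A = \left(\begin{smallmatrix} 0 & 0 \\ 1 & -1 \end{smallmatrix}\right)$: since $x_1(A\x)_1 \equiv 0$, one gets $\alpha(T_A)=\alpha(A)=0 \ge 0$, yet $a_{22}=-1<0$, so $A$ is not a P$_0$ matrix. Thus the step ``$\alpha(T_\mathcal{A}) \ge 0 \Rightarrow \mathcal{A}$ is P$_0$'' does not follow from the reduction as you (or the paper) state it; closing the gap would require restricting the maximum to indices with $x_i \ne 0$ or treating separately the degenerate case where the maximum vanishes only at zero coordinates. The P case is unaffected, since a strictly positive product $x_i(\mathcal{A}\x^{m-1})_i > 0$ already forces $x_i \ne 0$, so no extra care is needed there.
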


  \begin{proof}  We only prove the case for P tensors.  The proof for the P$_0$ tensor case is similar.

   (i) Let $\mathcal{A}$ be a P tensor. Then it follows from the definition of P tensors that for each nonzero $\x \in \mathbb{R}^n$, $$\max_{i\in I_n}x_i(\mathcal{A} \x^{m-1})_i>0,$$
   and so $$\max_{i\in I_n}x_i(\|\x\|_2^{2-m}\mathcal{A} \x^{m-1})_i=\|\x\|_2^{2-m}\max_{i\in I_n}x_i(\mathcal{A} \x^{m-1})_i>0.$$ Therefore we have $$\alpha(T_\mathcal{A})=\min_{\|\x \|_\infty=1}\left\{\max_{i\in I_n}x_i(T_\mathcal{A}(\x))_i\right\}>0.$$

   If $\alpha(T_\mathcal{A})>0$, then it is obvious that for each nonzero $\y \in \mathbb{R}^n$,
   $$\max_{i\in I_n}\left(\frac{\y}{\|\y\|_\infty}\right)_i\left(T_\mathcal{A}\left(\frac{\y}{\|\y\|_\infty}\right)\right)_i\geq\alpha(T_\mathcal{A}) >0.$$ Hence, $$\max_{i\in I_n}y_i(T_\mathcal{A}(\y))_i= \max_{i\in I_n}y_i\left(\|\y\|_2^{2-m}\mathcal{A}\y^{m-1}\right)_i >0.$$
 Thus   $y_j(\mathcal{A}\y^{m-1})_j>0$ for some $j\in I_n$, i.e., $\mathcal{A}$ is a P tensor.

 (ii)  Assume that $m$ is even.

 Let $\mathcal{A}$ be a P tensor. Then for each nonzero $\x\in \mathbb{R}^n$,  there exists an $i\in I_n$ such that $x_i(\mathcal{A} \x^{m-1})_i>0,$  and so
 $$0<x_i^{\frac1{m-1}}(\mathcal{A}\x^{m-1})_i^{\frac1{m-1}}=x_i^{\frac{2-m}{m-1}}(x_i(\mathcal{A}\x^{m-1})_i^{\frac1{m-1}}).$$
 Since $m$ is even, we have $x_i^{\frac{2-m}{m-1}}>0$ for $x_i\ne0$, and so,
 $$0<x_i(\mathcal{A}\x^{m-1})^{\frac1{m-1}}_i=x_i(F_\mathcal{A}(\x))_i.$$
 That is,  for each nonzero $\x\in \mathbb{R}^n$,  $\max\limits_{i\in I_n}x_i(F_\mathcal{A}(\x))_i>0.$ Thus we have $$\alpha(F_\mathcal{A})=\min_{\|\x\|_\infty=1}\{\max_{i\in I_n}x_i(F_\mathcal{A}(\x))_i\}>0.$$

   If $\alpha(F_\mathcal{A})>0$, then it is obvious that for each nonzero $\y \in \mathbb{R}^n$,
   $$\max_{i\in I_n}\left(\frac{\y}{\|\y\|_\infty}\right)_i\left(F_\mathcal{A}(\frac{\y}{\|\y\|_\infty})\right)_i\geq\alpha(F_\mathcal{A}) >0.$$  Hence, there exists a $j\in I_n$ such that
   $$y_j(F_\mathcal{A}(\y))_j= y_j(\mathcal{A}\y^{m-1})_j^{\frac1{m-1}} >0.$$
 Thus $$y_j^{m-2}(y_j(\mathcal{A} \y^{m-1})_j)=y_j^{m-1}(\mathcal{A}\y^{m-1})_j >0.$$
 Since $m$ is even, we have $y_j^{m-2}>0$.   Hence,
   $y_j(\mathcal{A}\y^{m-1})_j>0$, i.e., $\mathcal{A}$ is a P tensor.
 \end{proof}

\begin{Question}\label{Q2}\em For a P matrix $P$,  Mathias \cite{Ma} showed that $\alpha(A)$ has a strictly positive lower bound. Then for a P tensor $\mathcal{A} \in T_{m, n}$ ($m>2$), does $\alpha(F_\mathcal{A})$ or $\alpha(T_\mathcal{A})$ have a strictly positive lower bound?\end{Question}

\medskip

\begin{Question} \label{Q3}\em It is well-known that $A$ is a P matrix  if and only if the linear complementarity problem
$$\mbox{ find } \z \in \mathbb{R}^n\mbox{ such that }\z \geq \0, \q + A\z \geq \0, \mbox{ and }\z^\top (\q + A\z) = 0 $$
has a unique solution for all $\q \in \mathbb{R}^n$. Then for a P
tensor $\mathcal{A} \in T_{m, n}$ ($m>2$), does a similar property hold for
the following nonlinear complementarity problem
$$\mbox{ find } \x \in \mathbb{R}^n\mbox{ such that }\x \geq \0, \q + \mathcal{A}\x^{m-1} \geq \0, \mbox{ and }\x^\top (\q + \mathcal{A}\x^{m-1}) = 0? $$\end{Question}

\section{B and B$_0$ Tensors}
\hspace{4mm}  An $n$-dimensional B matrix $B=(b_{ij})$ is a square
real matrix with its entries satisfying that for all $i\in I_n$
$$\sum_{j=1}^{n}b_{ij}>0\mbox{ and }\frac1n\sum_{j=1}^{n}b_{ij}>b_{ik},\ i\neq k.$$
 Many nice properties and applications of such B matrices have been studied by Pe\~na \cite{Pe,Pe1}. It was proved that B matrices are a subclass of P matrices in \cite{Pe}.

As a natural extension of B matrices, we now give the definition of
B and B$_0$ tensors.

\begin{Definition}\label{d3}\em Let $\mathcal{B} = (b_{i_1\cdots i_m}) \in T_{m, n}$.  We say that $\mathcal{B}$ is a B tensor iff for all $i \in I_n$
$$\sum_{i_2,\cdots, i_m=1}^{n}b_{ii_2i_3\cdots i_m}>0$$ and $$\frac1{n^{m-1}}\left(\sum_{i_2,\cdots, i_m=1}^{n}b_{ii_2i_3\cdots i_m}\right)>b_{ij_2j_3\cdots j_m} \mbox{ for all } (j_2, j_3, \cdots, j_m)\ne (i, i, \cdots, i).$$
We say that $\mathcal{B}$ is a B$_0$ tensor iff for all $i \in I_n$
$$\sum_{i_2,\cdots, i_m=1}^{n}b_{ii_2i_3\cdots i_m}\ge0$$ and $$\frac1{n^{m-1}}\left(\sum_{i_2,\cdots, i_m=1}^{n}b_{ii_2i_3\cdots i_m}\right)\ge b_{ij_2j_3\cdots j_m} \mbox{ for all } (j_2, j_3, \cdots, j_m)\ne (i, i, \cdots, i).$$
\end{Definition}

Unlike P and P$_0$ tensors, it is easily checkable if a given tensor
in $T_{m, n}$ is a B or $B_0$ tensor or not.

\subsection{Entries of B and B$_0$ Tensors} \hspace{4mm}

We first study some properties of entries of B and B$_0$ tensors.

\begin{Theorem}\label{t5}\em Let $\mathcal{B} = (b_{i_1\cdots i_m}) \in T_{m, n}$.  If $\mathcal{B}$ is a B tensor, then for each $i\in I_n$,
 $$b_{ii\cdots i}>\max\{0, b_{ij_2j_3\cdots j_m}; (j_2, j_3, \cdots, j_m)\ne (i, i, \cdots, i), j_2,j_3,\cdots, j_m \in I_n \}.$$
 If $\mathcal{B}$ is a B$_0$ tensor, then for each $i\in I_n$,
 $$b_{ii\cdots i}\ge \max\{0, b_{ij_2j_3\cdots j_m}; (j_2, j_3, \cdots, j_m)\ne (i, i, \cdots, i), j_2,j_3,\cdots, j_m \in I_n \}.$$
 \end{Theorem}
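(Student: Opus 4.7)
My plan is to isolate the diagonal entry inside the row sum that defines a B tensor and compare it against the row average. Set $S_i := \sum_{i_2,\ldots,i_m=1}^{n} b_{i i_2 \cdots i_m}$ and rewrite
\[
b_{i i \cdots i} \;=\; S_i \;-\; \sum_{(j_2,\ldots,j_m)\neq(i,\ldots,i)} b_{i j_2 \cdots j_m}.
\]
The defining condition of a B tensor gives $S_i > 0$ together with the upper bound $b_{i j_2 \cdots j_m} < S_i / n^{m-1}$ on each of the $n^{m-1} - 1$ off-diagonal entries in row $i$. Summing the latter strict inequalities yields
\[
\sum_{(j_2,\ldots,j_m)\neq(i,\ldots,i)} b_{i j_2 \cdots j_m} \;<\; (n^{m-1}-1) \cdot \frac{S_i}{n^{m-1}},
\]
and substituting back produces the key estimate $b_{i i \cdots i} > S_i / n^{m-1}$.

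From this one estimate both conclusions drop out: since $S_i > 0$, we have $b_{i i \cdots i} > S_i / n^{m-1} > 0$, and chaining with the B-tensor bound $S_i / n^{m-1} > b_{i j_2 \cdots j_m}$ gives $b_{i i \cdots i} > b_{i j_2 \cdots j_m}$ for every off-diagonal index tuple, which together is exactly the claimed strict maximum. For the B$_0$ case the argument is literally the same with each $>$ replaced by $\geq$, using the weaker hypotheses $S_i \geq 0$ and $b_{i j_2 \cdots j_m} \leq S_i/n^{m-1}$.

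I do not foresee a genuine obstacle here, only a minor bookkeeping point: when $n = 1$ the index tuple $(i,\ldots,i)$ is the only one available, the set of off-diagonal entries is empty, and the sum-of-strict-inequalities step becomes vacuous. In that degenerate case the theorem reduces to the single claim $b_{1\cdots 1} > 0$ (respectively $\geq 0$), which coincides with the row-sum hypothesis $S_1 > 0$ (respectively $\geq 0$), so it is handled trivially. Aside from flagging this, the proof is essentially the one-line estimate above.
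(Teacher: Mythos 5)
Your proof is correct and uses essentially the same row‑sum decomposition and summed per‑entry bounds as the paper; the only difference is organizational — you derive the single inequality $b_{ii\cdots i} > S_i/n^{m-1}$ and read both conclusions off it directly, whereas the paper first compares $b_{ii\cdots i}$ against the largest off‑diagonal entry and then obtains positivity by a short contradiction argument. Your version is marginally tidier and also correctly flags the degenerate $n=1$ case, but the underlying algebra is the same.
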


  \begin{proof}
   Suppose that $\mathcal{B} \in T_{m, n}$ is a B tensor.   By Definition \ref{d3} that for all $i \in I_n$
\begin{equation}\label{eq:31}\sum_{i_2,\cdots, i_m=1}^{n}b_{ii_2i_3\cdots i_m}>0\end{equation} and
\begin{equation}\label{eq:32}\frac1{n^{m-1}}\left(\sum_{i_2,\cdots, i_m=1}^{n}b_{ii_2i_3\cdots i_m}\right)>b_{ij_2j_3\cdots j_m} \mbox{ for all }
(j_2, j_3, \cdots, j_m)\ne (i, i, \cdots, i).\end{equation} Let
$b_{ik_2k_3\cdots k_m}=\max\{b_{ij_2j_3\cdots j_m}: (j_2, j_3,
\cdots, j_m)\ne (i, i, \cdots, i)\}.$ Then it follows from
(\ref{eq:32}) that $$\sum_{i_2,\cdots, i_m=1}^{n}b_{ii_2i_3\cdots
i_m}>n^{m-1}b_{ik_2k_3\cdots k_m}\geq b_{ik_2k_3\cdots
k_m}+\sum_{(j_2, j_3, \cdots, j_m)\ne (i, i, \cdots, i) }
b_{ij_2j_3\cdots j_m}.$$  Thus  $$b_{iii\cdots i}>b_{ik_2k_3\cdots
k_m}=\max\{b_{ij_2j_3\cdots j_m}: (j_2, j_3, \cdots, j_m)\ne (i, i,
\cdots, i)\}.$$ Therefore, $ b_{iii\cdots i}>0$. In fact, suppose
$b_{iii\cdots i}\leq0.$ Then $\max\{b_{ij_2j_3\cdots j_m}: (j_2,
j_3, \cdots, j_m)\ne (i, i, \cdots, i)\}< b_{iii\cdots i}\leq0$,
which implies that
$$\sum_{i_2,\cdots, i_m=1}^{n} b_{ii_2i_3\cdots
i_m}\leq0.$$ This contradicts to (\ref{eq:31}).    The case for
B$_0$ tensors can be proved similarly.
\end{proof}
Let $\mathcal{B} = (b_{i_1\cdots i_m}) \in T_{m, n}$.  For each $i\in I_n$,
define
\begin{equation}\label{eq:33}\beta_i(\mathcal{B})=\max\{0,
b_{ij_2j_3\cdots j_m}; (j_2, j_3, \cdots, j_m)\ne (i, i, \cdots, i),
j_2,j_3,\cdots, j_m \in I_n \}.\end{equation} With the help of the
quantity $\beta_i(\mathcal{B})$, we will study further the properties of B
tensors.

 \begin{Theorem}\label{p3}\em Let $\mathcal{B} = (b_{i_1\cdots i_m}) \in T_{m, n}$.  Then $\mathcal{B}$ is B tensor if and only if for each $i\in I_n$,
 $$\sum_{i_2,\cdots, i_m=1}^{n} b_{ii_2i_3\cdots
 i_m}>n^{m-1}\beta_i(\mathcal{B});$$  and $\mathcal{B}$ is B$_0$ tensor if and only if for each $i\in I_n$,
 $$\sum_{i_2,\cdots, i_m=1}^{n} b_{ii_2i_3\cdots
 i_m}\ge n^{m-1}\beta_i(\mathcal{B}).$$
 \end{Theorem}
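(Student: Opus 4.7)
The plan is to observe that the quantity $\beta_i(\mathcal{B})$ is defined precisely as the maximum of $0$ and all off-diagonal entries in ``row'' $i$, so the single inequality $\sum_{i_2,\cdots,i_m=1}^n b_{ii_2\cdots i_m} > n^{m-1}\beta_i(\mathcal{B})$ compresses both defining conditions of a B tensor simultaneously. The proof is a direct unpacking of the definitions of $\mathcal{B}$ being a B (respectively B$_0$) tensor and of $\beta_i(\mathcal{B})$ in \reff{eq:33}, and requires no new machinery.

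For the forward direction, I would fix $i \in I_n$, assume $\mathcal{B}$ is a B tensor, and use the second defining inequality from Definition \ref{d3} to get $\sum_{i_2,\cdots,i_m=1}^n b_{ii_2\cdots i_m} > n^{m-1} b_{ij_2\cdots j_m}$ for every tuple $(j_2,\ldots,j_m) \neq (i,\ldots,i)$. Taking the maximum of the right-hand side over all such tuples gives $\sum > n^{m-1}\max_{(j_2,\ldots,j_m)\neq(i,\ldots,i)} b_{ij_2\cdots j_m}$. Combining this with the first defining inequality $\sum > 0 = n^{m-1}\cdot 0$ and invoking the definition \reff{eq:33} of $\beta_i(\mathcal{B})$ as the maximum of these two quantities yields $\sum > n^{m-1}\beta_i(\mathcal{B})$, as desired.

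For the reverse direction, I would assume the single inequality $\sum_{i_2,\cdots,i_m=1}^n b_{ii_2\cdots i_m} > n^{m-1}\beta_i(\mathcal{B})$ holds for every $i \in I_n$. Since $\beta_i(\mathcal{B}) \geq 0$ by construction, this immediately gives the first condition $\sum > 0$. Since $\beta_i(\mathcal{B}) \geq b_{ij_2\cdots j_m}$ for every $(j_2,\ldots,j_m) \neq (i,\ldots,i)$, also by construction, we obtain $\sum > n^{m-1} b_{ij_2\cdots j_m}$, which upon dividing by $n^{m-1}$ is exactly the second condition of Definition \ref{d3}. Hence $\mathcal{B}$ is a B tensor.

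The B$_0$ case is handled by replacing every strict inequality with its non-strict counterpart; no step of the argument is affected. There is no real obstacle here since the theorem is essentially a repackaging result, but one should be careful that the inclusion of $0$ in the defining set of $\beta_i(\mathcal{B})$ is precisely what makes the single combined inequality equivalent to the conjunction of the two original conditions, which is the only subtle point worth highlighting.
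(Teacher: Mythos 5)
Your proof is correct and takes essentially the same approach as the paper, which simply notes that since $\beta_i(\mathcal{B}) \geq 0$, the equivalence follows directly from Definition~\ref{d3}; you have merely spelled out the routine unpacking that the paper leaves implicit.
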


  \begin{proof}
   Since $\beta_i(\mathcal{B})\geq0$, the desired conclusion directly follows from Definition \ref{d3}.   \end{proof}

 \begin{Theorem}\label{t6}\em Let $\mathcal{B} = (b_{i_1\cdots i_m}) \in T_{m, n}$.   If $\mathcal{B}$ is a B tensor, then for each $i\in I_n$, \begin{itemize}
\item[(i)] $b_{ii\cdots i}>\sum\limits_{b_{ii_2\cdots i_m}<0} |b_{ii_2i_3\cdots i_m}|$;
\item[(ii)] $b_{ii\cdots i}> |b_{ij_2j_3\cdots j_m}|$ for all $(j_2, j_3, \cdots, j_m)\ne (i, i, \cdots, i), j_2,j_3,\cdots, j_m \in
I_n$.
\end{itemize}
If $\mathcal{B}$ is a B$_0$ tensor, then (i) and (ii) hold with ``$>$'' being
replaced by ``$\ge$''.
 \end{Theorem}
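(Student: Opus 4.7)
The plan is to observe first that part (ii) follows from part (i) together with Theorem~\ref{t5}. Indeed, fix an off-diagonal tuple $(j_2,\ldots,j_m)\ne(i,\ldots,i)$. If $b_{ij_2\cdots j_m}\ge 0$, then Theorem~\ref{t5} already gives $b_{ii\cdots i}>b_{ij_2\cdots j_m}=|b_{ij_2\cdots j_m}|$. If instead $b_{ij_2\cdots j_m}<0$, then $|b_{ij_2\cdots j_m}|$ is one of the terms appearing in the sum $\sum_{b_{ii_2\cdots i_m}<0}|b_{ii_2\cdots i_m}|$, so (i) immediately dominates that single term. Hence the real work is in establishing (i), and the B$_0$ case will be handled in parallel by replacing each strict inequality below by its non-strict counterpart.

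To prove (i), I would set $s:=\sum_{i_2,\ldots,i_m=1}^{n}b_{ii_2\cdots i_m}$ and split the off-diagonal entries into the negative set $S^-=\{(i_2,\ldots,i_m)\ne(i,\ldots,i):b_{ii_2\cdots i_m}<0\}$ and the non-negative set $S^+$. The identity
\[
b_{ii\cdots i}-\sum_{S^-}|b_{ii_2\cdots i_m}|=s-\sum_{S^+}b_{ii_2\cdots i_m}
\]
reduces the claim to showing that the right-hand side is strictly positive. The B-tensor condition (Definition~\ref{d3}) gives $b_{ij_2\cdots j_m}<s/n^{m-1}$ for every off-diagonal tuple, so in particular every element of $S^+$ is strictly less than $s/n^{m-1}$. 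Since $|S^+|\le n^{m-1}-1$ and $s>0$, summing yields
\[
\sum_{S^+}b_{ii_2\cdots i_m}<|S^+|\cdot\frac{s}{n^{m-1}}\le\frac{n^{m-1}-1}{n^{m-1}}\,s<s,
\]
which is precisely what is needed. (If $S^+=\emptyset$ the bound degenerates to $0<s$, which is just the B-tensor positivity condition.)

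The main technical point, and the one I would double-check carefully, is the counting step $|S^+|\le n^{m-1}-1$: it is this slack of one index (coming from excluding the diagonal tuple $(i,\ldots,i)$) that converts the per-entry strict inequality $b_{ij_2\cdots j_m}<s/n^{m-1}$ into the aggregate strict inequality $\sum_{S^+}b_{ii_2\cdots i_m}<s$. For the B$_0$ version, the per-entry inequality is only weak, so one gets $\sum_{S^+}b_{ii_2\cdots i_m}\le(n^{m-1}-1)\beta_i(\mathcal{B})\le s$ using $\beta_i(\mathcal{B})\le s/n^{m-1}$ (from Proposition~\ref{p3}) and $s\ge 0$; this yields (i) with ``$\ge$'', and (ii) follows from (i) and Theorem~\ref{t5} exactly as in the strict case.
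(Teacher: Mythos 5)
Your proof is correct, and the argument for (ii) (from (i) plus Theorem~\ref{t5}) matches what the paper leaves as ``obvious.'' For (i), you and the paper use the same underlying estimate -- each off-diagonal entry is strictly below $s/n^{m-1}$ and there are only $n^{m-1}-1$ of them -- but organize the bookkeeping differently. The paper takes the inequality $\sum b_{ii_2\cdots i_m} > n^{m-1}\beta_i(\mathcal{B})$ from Proposition~\ref{p3} and rearranges it once into
\[
b_{ii\cdots i}-\beta_i(\mathcal{B}) > \sum_{(j_2,\ldots,j_m)\ne(i,\ldots,i)}\bigl(\beta_i(\mathcal{B})-b_{ij_2\cdots j_m}\bigr),
\]
then observes that every summand on the right is nonnegative and that each summand with $b_{ii_2\cdots i_m}<0$ is at least $|b_{ii_2\cdots i_m}|$, while $b_{ii\cdots i}\ge b_{ii\cdots i}-\beta_i(\mathcal{B})$; this drops straight to (i) with no case analysis. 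Your version instead isolates the exact remainder via $b_{ii\cdots i}-\sum_{S^-}|b| = s-\sum_{S^+}b$ and bounds $\sum_{S^+}b$ by a per-entry estimate. That is just as valid and arguably more transparent about where the slack $n^{m-1}-(n^{m-1}-1)$ is being spent, but it does force you to handle $S^+=\emptyset$ separately to preserve strictness, and to route the $B_0$ case through $\beta_i(\mathcal{B})$ anyway. Net: same key inequality, two different ways of aggregating it; the paper's is a touch more uniform, yours makes the role of the positive off-diagonal entries more explicit.
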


  \begin{proof}  Suppose that  $\mathcal{B}$ is a B tensor.
  (i)  It follows from Proposition \ref{p3} that for each $i\in I_n$
\begin{equation}\label{eq:34}b_{ii\cdots i}-\beta_i(\mathcal{B})>\sum_{(j_2, j_3, \cdots, j_m)\ne (i, i, \cdots, i)}(\beta_i(\mathcal{B})-b_{ij_2j_3\cdots j_m}).\end{equation}
It follows from Definition \ref{d3} that for all $i\in I_n$,
$$ \beta_i(\mathcal{B})\geq0\mbox{ and }
\beta_i(\mathcal{B})-b_{ij_2j_3\cdots j_m}\geq0\mbox{ for all } (j_2, j_3,
\cdots, j_m)\ne (i, i, \cdots, i).$$ Then for all $b_{ii_2i_3\cdots
i_m}<0$,
$$\beta_i(\mathcal{B})-b_{ii_2i_3\cdots i_m}\geq
|b_{ii_2i_3\cdots i_m}|$$ and $$b_{ii\cdots i}\geq b_{ii\cdots
i}-\beta_i(\mathcal{B}). $$ So  by (\ref{eq:34}),  we have
$$b_{ii\cdots i}>\sum_{(j_2, j_3, \cdots, j_m)\ne (i, i, \cdots, i)}(\beta_i(\mathcal{B})- b_{ij_2j_3\cdots j_m})\geq\sum\limits_{b_{ii_2\cdots i_m}<0} |b_{ii_2i_3\cdots i_m}|.$$

(ii) is an obvious conclusion by combining Theorem \ref{t5} with
(i).

The case for B$_0$ tensors can be proved similarly.
 \end{proof}

\subsection{Principal Sub-Tensors of B and B$_0$ Tensors} \hspace{4mm}

We now show that every principal sub-tensor of a B (B$_0$) tensor is
a B (B$_0$) tensor.

 \begin{Theorem}\label{t7}\em Let $\mathcal{B} = (b_{i_1\cdots i_m}) \in T_{m, n}$.   If $\mathcal{B}$ is a B (B$_0$) tensor, then every principal sub-tensor of $\mathcal{B}$ is also a B (B$_0$) tensor.
  \end{Theorem}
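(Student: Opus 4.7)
The plan is to use Proposition~\ref{p3}, which recasts the defining conditions as $\sum_{i_2,\ldots,i_m=1}^n b_{ii_2\cdots i_m} > n^{m-1}\beta_i(\mathcal{B})$ for each $i$ (strict for B tensors, non-strict for B$_0$ tensors). Fix a subset $J \subset I_n$ with $|J| = r \leq n$, let $\mathcal{B}^J_r$ be the associated principal sub-tensor, and pick any $i \in J$. Set $S_i := \sum_{i_2,\ldots,i_m=1}^n b_{ii_2\cdots i_m}$ and $\bar S_i := \sum_{i_2,\ldots,i_m \in J} b_{ii_2\cdots i_m}$, so what I need to prove (in the B case) is $\bar S_i > r^{m-1}\beta_i(\mathcal{B}^J_r)$.

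I would begin with two bookkeeping observations. First, $\beta_i(\mathcal{B}^J_r) \leq \beta_i(\mathcal{B})$, because the maximum defining $\beta_i(\mathcal{B}^J_r)$ ranges over a subset of the entries used in $\beta_i(\mathcal{B})$. Second, since $i \in J$, any tuple $(i_2,\ldots,i_m) \in I_n^{m-1} \setminus J^{m-1}$ must have some $i_k \notin J$ and hence cannot equal $(i,\ldots,i)$; Definition~\ref{d3} therefore gives $b_{ii_2\cdots i_m} < S_i/n^{m-1}$ for every such tuple.

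The key step is to sum this latter inequality over the $n^{m-1} - r^{m-1}$ tuples in $I_n^{m-1} \setminus J^{m-1}$, yielding
$$S_i - \bar S_i \;=\; \sum_{(i_2,\ldots,i_m) \in I_n^{m-1} \setminus J^{m-1}} b_{ii_2\cdots i_m} \;<\; \bigl(n^{m-1} - r^{m-1}\bigr)\,\frac{S_i}{n^{m-1}},$$
so $\bar S_i > (r^{m-1}/n^{m-1}) S_i$. Combining with $S_i > n^{m-1}\beta_i(\mathcal{B})$ from Proposition~\ref{p3} and with $\beta_i(\mathcal{B}) \geq \beta_i(\mathcal{B}^J_r)$ gives $\bar S_i > r^{m-1}\beta_i(\mathcal{B}^J_r)$. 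Reading Proposition~\ref{p3} in the other direction for the $r$-dimensional tensor $\mathcal{B}^J_r$, this is exactly the condition that $\mathcal{B}^J_r$ is a B tensor. The B$_0$ case follows from the same chain of inequalities with ``$>$'' replaced throughout by ``$\geq$''.

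I do not foresee a real obstacle; the only delicate point is the bookkeeping that every tuple outside $J^{m-1}$ is non-diagonal (so that the hypothesis of Definition~\ref{d3} genuinely applies), which is automatic because $i \in J$. The trivial case $r = n$ collapses $\mathcal{B}^J_r$ to $\mathcal{B}$ itself, while for $r < n$ the set $I_n^{m-1} \setminus J^{m-1}$ is non-empty, which is all that is needed to preserve the strict inequality in the B case.
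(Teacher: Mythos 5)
Your proof is correct, and it is a genuinely different and cleaner argument than the one in the paper. The paper argues by contradiction: it first invokes Theorem~\ref{t6}(i) to get positivity of the row sums over $J$, then supposes the sub-tensor inequality fails for some tuple, passes to the maximal off-diagonal entry $b_{ik_2\cdots k_m}$, and builds a chain of inequalities $n^{m-1}b_{ik_2\cdots k_m} \geq r^{m-1}b_{ik_2\cdots k_m} + \sum\{b_{ii_2\cdots i_m} : \text{not all } i_j \in J\} \geq \cdots \geq \sum_{i_2,\ldots,i_m=1}^n b_{ii_2\cdots i_m}$, contradicting Definition~\ref{d3}. You instead work forward from Proposition~\ref{p3}: every entry indexed outside $J^{m-1}$ is bounded strictly by $S_i/n^{m-1}$ (this is where the hypothesis is used), so summing the $n^{m-1}-r^{m-1}$ of them gives $\bar S_i > (r^{m-1}/n^{m-1})S_i$, and then monotonicity $\beta_i(\mathcal{B}^J_r) \leq \beta_i(\mathcal{B})$ together with $S_i > n^{m-1}\beta_i(\mathcal{B})$ closes the argument. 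The paper's key bound is that each entry outside $J^{m-1}$ is at most the maximal off-diagonal entry, while yours is that each is strictly less than $S_i/n^{m-1}$; both control the same "leakage" term, but your version is a direct derivation that avoids introducing the auxiliary maximizer, avoids contradiction, and handles positivity of $\bar S_i$ automatically since $r^{m-1}\beta_i(\mathcal{B}^J_r)\ge 0$, whereas the paper needs a separate appeal to Theorem~\ref{t6}(i) for that. You also correctly flag the only edge cases: $r=n$ (trivial, since $\mathcal{B}^J_r = \mathcal{B}$) and $r<n$ (where $I_n^{m-1}\setminus J^{m-1}$ is nonempty, preserving strictness).
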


  \begin{proof}   Suppose that $\mathcal{B}$ is a B tensor.
   Let a principal sub-tensor $\mathcal{B}^J_r$ of $\mathcal{B}$ be given.  Then it follows from Theorem \ref{t6} (i) that for all $i\in J,$ $$\sum_{i_2,\cdots, i_m\in J}b_{ii_2i_3\cdots i_m}>0.$$
   Now it suffices to show that  for all $i\in J,$ $$\sum_{i_2,\cdots, i_m\in J}b_{ii_2i_3\cdots i_m}>r^{m-1}b_{ij_2\cdots j_m} \mbox{ for all }
   (j_2, j_3, \cdots, j_m)\ne (i, i, \cdots, i),\  j_2,\cdots, j_m\in J.$$
Suppose not. Then there is $(i, j_2, \cdots, j_m)$ such that $i, j_2, \cdots, j_m \in J$ and $$\sum_{i_2,\cdots,
i_m\in J}b_{ii_2i_3\cdots i_m}\leq r^{m-1}b_{ij_2\cdots j_m}.$$ Let $b_{ik_2k_3\cdots
k_m}=\max\{b_{ii_2i_3\cdots
i_m};(i_2, i_3, \cdots, i_m) \ne (i, i, \cdots, i) \mbox{ and }i_2, i_3, \cdots, i_m\in I_n\}$. Then $b_{ik_2k_3\cdots
k_m}\geq b_{ij_2\cdots j_m}$.   Hence,
$$\begin{aligned}
  n^{m-1}b_{ik_2k_3\cdots k_m}\geq&r^{m-1}b_{ik_2k_3\cdots k_m}+\sum \left\{ b_{ii_2i_3\cdots i_m} : {\rm \ not \ all\ of\ } i_2, \cdots, i_m\ {\rm are\ in\ } J \right\}\\
  \geq&r^{m-1}b_{ij_2j_3\cdots j_m}+\sum \left\{ b_{ii_2i_3\cdots i_m} : {\rm \ not \ all\ of\ } i_2, \cdots, i_m\ {\rm are\ in\ } J \right\}\\
  \geq&\sum_{i_2,\cdots, i_m\in J}b_{ii_2i_3\cdots i_m}+\sum \left\{ b_{ii_2i_3\cdots i_m} : {\rm \ not \ all\ of\ } i_2, \cdots, i_m\ {\rm are\ in\ } J \right\}\\
  =& \sum_{i_2,\cdots, i_m=1}^n b_{ii_2i_3\cdots i_m}.
  \end{aligned}$$
Thus $$ \frac1{n^{m-1}}\left(\sum_{i_2,\cdots,
i_m=1}^n b_{ii_2i_3\cdots
i_m}\right)\leq b_{ik_2k_3\cdots k_m},$$ which obtains a
contradiction since $\mathcal{B}$ is a  B tensor.

The case for B$_0$ tensors can be proved similarly.
  \end{proof}

\subsection{The Relationship with M Tensors} \hspace{4mm}

Recall  that a tensor $\mathcal{A} = (a_{i_1\cdots i_m}) \in T_{m,
n}$ is called a Z tensor iff all of its off-diagonal entries are
non-positive, i.e., $a_{i_1\cdots i_m} \le 0$ when never $(i_1,
\cdots, i_m) \not = (i, \cdots, i)$ \cite{ZQZ}; $\mathcal{A}$ is called diagonally
dominated iff for all $i \in I_n$,
$$a_{i\cdots i} \ge \sum \{ |a_{ii_2\cdots i_m}| : (i_2,
\cdots, i_m) \not = (i, \cdots, i), i_j \in I_n, j = 2, \cdots, m
\};$$ $\mathcal{A}$ is called strictly diagonally dominated iff for all $i \in
I_n$,
$$a_{i\cdots i} > \sum \{ |a_{ii_2\cdots i_m}| : (i_2,
\cdots, i_m) \not = (i, \cdots, i), i_j \in I_n, j = 2, \cdots, m
\}.$$    It was proved in \cite{ZQZ} that a diagonally dominated Z
tensor is an M tensor, and a strictly diagonally dominated Z tensor
is a strong M tensor.   The definition of M tensors may be found in
\cite{ZQZ, DQW}.   Strong M tensors are called nonsingular tensors
in \cite{DQW}.   Laplacian tensors of uniform hypergraphs, defined
as a natural extension of Laplacian matrices of graphs, are M
tensors [16, 20-23].

Now we give the properties of a B (B$_0$) tensor under the condition
that it is a  Z tensor.

\begin{Theorem}\label{t 7} \em Let $\mathcal{B}=(b_{i_1i_2i_3\cdots i_m}) \in T_{m, n}$ be a Z tensor. Then the following properties are equivalent:\begin{itemize}
\item [(i)] $\mathcal{B}$ is  B (B$_0$) tensor;
\item [(ii)]  for each $i \in n$,  \ \ $\sum\limits_{i_2,\cdots, i_m=1}^n b_{ii_2i_3\cdots
i_m}$ is positive (nonnegative);
\item [(iii)] $\mathcal{B}$ is strictly diagonally dominant (diagonally dominated).
\end{itemize}
\end{Theorem}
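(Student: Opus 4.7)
The plan is to prove the equivalence by a short cycle (i)$\Rightarrow$(ii)$\Rightarrow$(iii)$\Rightarrow$(i), leveraging the Z-tensor hypothesis (off-diagonal entries are non-positive) at each step. Throughout, I will handle the B and B$_0$ cases in parallel, since the only difference is replacing strict inequalities by non-strict ones.

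First, (i)$\Rightarrow$(ii) is immediate: the positivity (respectively, nonnegativity) of $\sum_{i_2,\ldots,i_m=1}^n b_{ii_2\cdots i_m}$ is literally the first clause in Definition \ref{d3} of a B (B$_0$) tensor, so nothing needs to be shown.

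Next, for (ii)$\Rightarrow$(iii), I would split off the diagonal term and use the sign of the off-diagonal entries. Since $\mathcal{B}$ is a Z tensor, $b_{ii_2\cdots i_m}\le 0$ whenever $(i_2,\ldots,i_m)\ne(i,\ldots,i)$, so $|b_{ii_2\cdots i_m}|=-b_{ii_2\cdots i_m}$ for such indices. Writing
\[
\sum_{i_2,\ldots,i_m=1}^n b_{ii_2\cdots i_m}=b_{ii\cdots i}-\sum\{|b_{ii_2\cdots i_m}|:(i_2,\ldots,i_m)\ne(i,\ldots,i)\},
\]
the assumption that the left-hand side is positive (nonnegative) is precisely strict diagonal dominance (diagonal dominance).

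For (iii)$\Rightarrow$(i), I would reverse the same algebraic rearrangement to recover the first B-tensor condition, and then use non-positivity of off-diagonal entries to get the second one for free. Specifically, strict diagonal dominance together with $|b_{ii_2\cdots i_m}|=-b_{ii_2\cdots i_m}$ for off-diagonal indices yields $\sum_{i_2,\ldots,i_m=1}^n b_{ii_2\cdots i_m}>0$. Then, for any off-diagonal index tuple $(j_2,\ldots,j_m)\ne(i,\ldots,i)$, the Z-tensor property gives $b_{ij_2\cdots j_m}\le 0$, hence
\[
\frac{1}{n^{m-1}}\sum_{i_2,\ldots,i_m=1}^n b_{ii_2\cdots i_m}>0\ge b_{ij_2\cdots j_m},
\]
which is the second clause in Definition \ref{d3}. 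The B$_0$ case is identical after replacing $>$ by $\ge$ in the final chain of inequalities.

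There is no real obstacle here; the whole argument is a bookkeeping exercise that exploits the fact that under the Z-tensor assumption, the quantity $\sum_{i_2,\ldots,i_m}b_{ii_2\cdots i_m}$ is exactly $b_{ii\cdots i}$ minus the sum of absolute values of the off-diagonal entries in row $i$. The only mild subtlety is verifying that the second B-tensor inequality in (iii)$\Rightarrow$(i) holds for every off-diagonal tuple individually, not just on average; this is handled trivially by the sign comparison $0>b_{ij_2\cdots j_m}$.
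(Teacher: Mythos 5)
Your proposal is correct and follows essentially the same route as the paper: the cycle (i)$\Rightarrow$(ii)$\Rightarrow$(iii)$\Rightarrow$(i), with the Z-tensor sign condition $|b_{ii_2\cdots i_m}|=-b_{ii_2\cdots i_m}$ doing the algebraic work in both (ii)$\Rightarrow$(iii) and (iii)$\Rightarrow$(i). The paper's argument is the same rearrangement of the row sum and the same final comparison $\frac{1}{n^{m-1}}\sum_{i_2,\ldots,i_m}b_{ii_2\cdots i_m}>0\ge b_{ij_2\cdots j_m}$, so there is nothing substantive to add.
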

\begin{proof}    We now prove the case for B tensors.   The proof for the B$_0$ tensor case is similar.

It follows from Definition \ref{d3} that (i) implies (ii).

Since  $\mathcal{B}$ be a  Z tensor, all of its off-diagonal entries are
non-positive.   Thus, for any of its off-diagonal entry
$b_{ii_2\cdots i_m}$, we have $\left|b_{ii_2i_3\cdots
i_m}\right|=-b_{ii_2i_3\cdots i_m}$.    Thus, (ii) means that for $i
\in I_n$,
$$\begin{aligned}
b_{iii\cdots i} > & -\sum \left\{ b_{ii_2i_3\cdots i_m} : (i_2,
\cdots, i_m) \ne (i, \cdots, i), i_j \in I_n, j= 2, \cdots, m
\right\}\\
= & \sum \left\{ |b_{ii_2i_3\cdots i_m}| : (i_2, \cdots, i_m) \ne
(i, \cdots, i), i_j \in I_n, j= 2, \cdots, m \right\}\\
\geq & \ 0.
\end{aligned}$$    Thus, (ii) implies (iii).

From (iii), it is obvious that  for each $i \in I_n$,
$$\sum\limits_{i_2,\cdots, i_m=1}^n b_{ii_2i_3\cdots
i_m}>0.$$    Since all the off-diagonal entries of $\mathcal{B}$ are
non-positive, we have
$$\frac1{n^{m-1}}\sum\limits_{i_2,\cdots,
i_m=1}^n b_{ii_2i_3\cdots i_m}>0\geq b_{ii_2i_3\cdots i_m} \mbox{
for all }(i_2, \cdots, i_m) \ne (i, \cdots, i).$$ This shows that
(iii) implies (i). \end{proof}

From this theorem, we see that if a Z tensor is also a B$_0$ (B)
tensor, then it is a (strong) M tensor.

\bigskip

\begin{Question}\label{Q4}\em When $m=2$, it is known that each B matrix is a P
matrix.   If $m$ is odd, in general, a B (B$_0$) tensor is not a P
(P$_0$) tensor.   For example, let $a_{i\cdots i} = 1$ and
$a_{i_1\cdots i_m} = 0$ otherwise.  Then $\mathcal{A} = (a_{i_1\cdots i_m})$
is the identity tensor.   When $m$ is odd, the identity tensor is a
$B$ tensor, but not a P or P$_0$ tensor.    But we still make ask,
when $m \ge 4$ and is even, is a B (B$_0$) tensor a P (P$_0$)
tensor?\end{Question}

\begin{Question}\label{Q5}\em A symmetric P (P$_0$) tensor is positive
(semi-)definite.   When $m \ge 4$ and is even, is a symmetric B
(B$_0$) tensor positive (semi-)definite?   If the answer is ``yes''
to this question, then we will have another checkable sufficient
condition for positive (semi-)definite tensors.\end{Question}

\begin{Question}\label{Q6}\em What are the spectral properties of a B (B$_0$)
tensor?\end{Question}

\section{Perspectives}
\hspace{4mm}  In this paper, we make an initial study on P, P$_0$, B
and B$_0$ tensors.    We see that they are linked with positive
(semi-)definite tensors and M tensors, which are useful in
automatical control, magnetic resonance imaging and spectral
hypergraph theory.   The six questions at the ends of Sections 3-5
pointed out some further research directions.

In the following, we point out the  potential links between the above results and
optimization, nonlinear equations, nonlinear complementarity
problems, variational inequalities and the non-negative tensor
theory.

\medskip

{\bf (i)}.  The potential link between the above results and optimization, nonlinear
equations, nonlinear complementarity problems and variational
inequalities.   Question \ref{Q3} has also pointed out the potential link between P tensor and nonlinear complementarity problems.  We may also consider the
optimization problem
$$\min \{ \mathcal{A} \x^m + \q^\top \x \},$$
the nonlinear equations \cite{OR}
$$\mathcal{A} x^{m-1} = \q$$
and the variational inequality problem \cite{FaP, QSZ}
$$\mbox{ find } \x_* \in X, \mbox{such that} (\x - \x^*)^\top \mathcal{A}
\x_*^{m-1} \ge 0, \mbox{ for all } \x \in X,$$ where $X$ is a
nonempty closed subset of $\mathbb{R}^n$.  When $\mathcal{A}$ is a P, P$_0$, B or
B$_0$ tensor, what properties we can obtain for the above problems?

{\bf (ii)}.  The potential link between the above results and the non-negative tensor theory.
The non-negative tensor theory at least include two parts:  the
non-negative tensor decomposition \cite{CZPA} and the spectral theory
of non-negative tensors \cite{CQZ}.   The recent paper \cite{QXX}
linked these two parts.   However, there are still many questions
not answered in non-negative tensors.   In the non-negative matrix
theory \cite{BP}, a doubly non-negative matrix is a positive
semi-definite, non-negative matrix.  The research on positive
semi-definite, non-negative tensors is very little.   Thus, we may
ask a question weaker than Question \ref{Q5}:

\medskip

\begin{Question}\label{Q7}\em When $m \ge 4$ and is even, is a symmetric
nonnegative B (B$_0$) tensor positive (semi-)definite?   If the
answer is ``yes'' to this question, then we will have more
understanding on positive semi-definite, non-negative tensors.\end{Question}

We may also ask the following question:

\begin{Question}\label{Q8}  \em What is the relation between non-negative B (B$_0$)
tensors and completely positive tensors introduced in \cite{QXX}?\end{Question}

In a word, this paper is only an initial study on P, P$_0$, B and
B$_0$ tensors.   Many questions for these tensors are waiting for
answers.

It should be pointed out that after the first version of this paper, two more papers [35, 36] on P, P$_0$, B and B$_0$ tensors appeared.    In [35], it was proved that an even order
symmetric B$_0$ tensor is positive semi-definite and an even order symmetric B tensor is positive definite.   Some further properties of P, P$_0$, B and B$_0$ tensors were obtained in [36].  These answered some questions raised in this paper and enriched the theory of P, P$_0$, B and B$_0$ tensors.

\section{Conclusions}

In this paper, we extend some classes of structured matrices to higher order tensors.  We discuss their relationships with positive semi-definite tensors and some other
  structured tensors.    We show that every principal sub-tensor of such a structured tensor is still a structured tensor in the same class, with a lower dimension.
  The potential links and applications of such structured tensors are also discussed.

There are more research topics on structured tensors.   In particular, can one construct an efficient algorithm to compute the extreme eigenvalues of a special structured tensor, other than the largest eigenvalue of a nonnegative tensor?
It is well-known \cite{CQZ} that there are efficient algorithms for computing the largest eigenvalue of a nonnegative tensor.   Until now, there are no polynomial-time algorithms for
computing extreme eigenvalues of structured tensors in the other cases.    The first challenging problem is to construct an efficient algorithm to compute the smallest real eigenvalue of
a Hilbert tensor \cite{SQ1}, with the condition that such a real eigenvalue has a real eigenvector.    A further challenging problem is to address the above problem for a Cauchy tensor \cite{CQ}
instead of a Hilbert tensor.  Note that the Hilbert tensor is
a special case of the Cauchy tensor \cite{CQ}.

\section*{\bf Acknowledgment}
The authors would like to thank the anonymous
referees for their valuable suggestions which helped us to improve
this manuscript.


\begin{thebibliography}{}
\bibitem{FiP} Fiedler,  M., Pt\'ak, V.:  On matrices with non-positive off-diagonal elements and positive principal minors. Czechoslovak Mathematical J.  12, 163-172 (1962)
\bibitem{CPS} Cottle, R.W.,  Pang, J.S.,  Stone, R.E.: The Linear
Complementarity Problem. Academic Press, Boston (1992)

\bibitem{FaP} Facchinei, F., Pang, J.S.:  Finite Dimensional Variational Inequalities and Complementarity Problems. Springer, New York (2003)
\bibitem{QSZ}  Qi, L., Sun, D., Zhou, G.: A new look at smoothing
Newton methods for nonlinear complementarity problems and box
constrained variational inequalities. Mathematical
Programming  87, 1-35 (2000)
\bibitem{BM} Bose, N.K., Modaress, A.R.: General procedure for
multivariable polynomial positivity with control applications.
 IEEE Trans. Automat. Contr. AC21, 596-601 (1976)
\bibitem{HH} Hasan, M.A.,  Hasan, A.A.: A procedure for the positive
definiteness of forms of even-order. IEEE Trans. Automat.
Contr.  41, 615-617  (1996)
\bibitem{JM} Jury, E.I., Mansour, M.:  Positivity and nonnegativity
conditions of a quartic equation and related problems.   IEEE
Trans. Automat. Contr.  AC26, 444-451  (1981)
\bibitem{WQ} Wang, F., Qi, L.: Comments on `Explicit criterion
for the positive definiteness of a general quartic form'.
IEEE Trans. Automat. Contr. 50, 416- 418  (2005)
\bibitem{Qi} Qi, L.: Eigenvalues of a real supersymmetric tensor.   J. Symbolic Comput. 40, 1302-1324 (2005)
\bibitem{CDHS} Chen, Y., Dai, Y.,  Han, D., Sun, W.: Positive
semidefinite generalized diffusion tensor imaging via quadratic
semidefinite programming.  SIAM J. Imaging Sci.
  6, 1531-1552 (2013)
\bibitem{HHNQ} Hu, S.,  Huang, Z., Ni, H.,  Qi, L.: Positive definiteness of
diffusion kurtosis imaging.  Inverse Problems and Imaging   6, 57-75 (2012)
\bibitem{QYW}  Qi, L., Yu, G., Wu, E.X.: Higher order positive semi-definite
diffusion tensor imaging.  SIAM J. Imaging Sci. 3, 416-433 (2010)
  \bibitem{QYX}  Qi, L.,  Yu, G., Xu, Y.: Nonnegative diffusion orientation
distribution function.  J. Math. Imaging Vision 45, 103-113  (2013)

  \bibitem{HQ} Hu, S., Qi, L.: Algebraic connectivity of an even uniform
hypergraph.  J. Comb. Optim.  24,
564-579 (2012)

\bibitem{LQY}  Li, G.,  Qi, L., Yu, G.: The Z-eigenvalues of a symmetric tensor
and its application to spectral hypergraph theory.  Numer.
Linear Algebra  Appl.   20, 1001-1029 (2013)
\bibitem{Qi14} Qi, L.: H$^+$-eigenvalues of Laplacian and signless Laplacian
tensors.   Commun. Math. Sci. 12, 1045-1064 (2014)
\bibitem{Pe} Pe\~na, J.M.: A class of P-matrices with applications to the localization of the eigenvalues of a real matrix.  SIAM J. Matrix Analysis and Applications  22, 1027-1037 (2001)
\bibitem{Pe1} Pe\~na, J.M.:  On an alternative to Gerschgorin circles and ovals of Cassini.   Numerische Mathematik     95, 337-345 (2003)
\bibitem{ZQZ}
Zhang, L., Qi, L., Zhou,  G.: M-tensors and some applications.   SIAM J. Matrix Anal. Appl.  35(2), 437-452 (2014)
\bibitem{HQ14} Hu, S., Qi, L.: The eigenvectors associated with the zero
eigenvalues of the Laplacian and signless Laplacian tensors of a
uniform hypergraph.   Discrete Appl.
Math. 169, 140-151 (2014)
\bibitem{HQS} Hu,  S., Qi,  L., Shao, J.: Cored hypergraphs, power hypergraphs
and their Laplacian eigenvalues.  Linear Algebra Appl.   439, 2980-2998  (2013)
\bibitem{HQX} Hu, S., Qi,  L., Xie, J.: The largest Laplacian and signless
Laplacian eigenvalues of a uniform hypergraph.  arXiv:1304.1315, (2013)
\bibitem{QSW} Qi, L.,  Shao, J., Wang, Q.: Regular uniform hypergraphs, $s$-cycles, $s$-paths and their largest Laplacian H-eigenvalues. Linear Algebra Appl.
443, 215-227 (2014)
\bibitem{MP} Mathias, R., Pang,  J.S.: Error bounds for the linear complementarity problem with a P-matrix.  Linear Algebra Appl.  132, 123-136 (1990)
 \bibitem{Ma} Mathias, R.: An improved bound for a fundamental constant associated with a P-matrix,   Appl. Math. Lett.   2, 297-300 (1989)

\bibitem{XZ} Xiu, N., Zhang, J.: A characteristic quantity of P-matrices.  Appl. Math. Lett.  15,  41-46 (2002)
\bibitem{GP} Garc\'ia-Esnaola, M.,  Pe\~na, J.M.:  Error bounds for linear complementarity problems for B-matrices. Appl. Math. Lett. 22, 1071-1075  (2009)
 \bibitem{SQ} Song, Y., Qi, L.: Spectral properties of positively homogeneous operators induced by higher order tensors.  SIAM J. Matrix Anal. Appl.  34, 1581-1595  (2013)

\bibitem{DQW} Ding, W., Qi, L., Wei, Y.: M-tensors and nonsingular
M-tensors.  Linear Algebra Appl.   439, 3264-3278 (2013)
\bibitem{OR} Ortega, J.M., Rheinboldt, W.C.:  Iterative
Solutions of Nonlinear Equations in Several Variables. Academic
Press, New York (1970)
\bibitem{CZPA}
Cichocki, A., Zdunek, R.,  Phan, A.H., Amari, S.:  Nonnegative
Matrix and Tensor Factorizations: Applications to Exploratory
Multiway Data Analysis and Blind Source Separation. Wiley, New
York (2009)
\bibitem{CQZ} Chang,  K.C.,  Qi, L., Zhang, T.: A survey on the spectral theory of nonnegative tensors.
 Numer. Linear Algebra Appl.   20,
891-912  (2013)
\bibitem{QXX} Qi, L., Xu, C., Xu, Y.: Nonnegative tensor factorization,
completely positive tensors and an hierarchically elimination
algorithm. to appear in: SIAM J. Matrix Anal. Appl.
\bibitem{BP} Berman, A., Plemmons, R.J.: Nonnegative Matrices
in the Mathematical Sciences. SIAM, Philadephia (1994)
\bibitem{QS} Qi, L., Song, Y.: An even order symmetric B tensor is positive definite.  Linear Algebra Appl. 457, 303-312 (2014)
\bibitem{YY} Yuan, P., You, L.: Some remarks on P, P$_0$, B and B$_0$ tensors. arXiv:1402.1288, (2014)
\bibitem{SQ1} Song, Y., Qi, L.; Infinite and finite dimensional Hilbert tensors. Linear Algebra Appl. 451, 1-14 (2014)
\bibitem{CQ} Chen, H., Qi, L.: Positive definiteness and semi-definiteness of even order symmetric Cauchy tensors. arXiv:1405.6363, (2014)
\end{thebibliography}
\end{document}